\renewcommand*{\backref}[1]{}
\renewcommand*{\backrefalt}[4]{%
    \ifcase #1 (Not cited.)%
    \or        (p.\,#2)%
    \else      (pp.\,#2)%
    \fi}
\begin{document}

\newtheorem{theorem}{Theorem}
\newtheorem{lemma}[theorem]{Lemma}
\newtheorem{example}[theorem]{Example}
\newtheorem{algol}{Algorithm}
\newtheorem{corollary}[theorem]{Corollary}
\newtheorem{prop}[theorem]{Proposition}
\newtheorem{definition}[theorem]{Definition}
\newtheorem{question}[theorem]{Question}
\newtheorem{problem}[theorem]{Problem}
\newtheorem{remark}[theorem]{Remark}
\newtheorem{conjecture}[theorem]{Conjecture}

\newcommand{\commM}[1]{\marginpar{%
\begin{color}{red}
\vskip-\baselineskip 
\raggedright\footnotesize
\itshape\hrule \smallskip M: #1\par\smallskip\hrule\end{color}}}

\newcommand{\commA}[1]{\marginpar{%
\begin{color}{blue}
\vskip-\baselineskip 
\raggedright\footnotesize
\itshape\hrule \smallskip A: #1\par\smallskip\hrule\end{color}}}
\def\xxx{\vskip5pt\hrule\vskip5pt}


\def\cA{{\mathcal A}}
\def\cB{{\mathcal B}}
\def\cC{{\mathcal C}}
\def\cD{{\mathcal D}}
\def\cE{{\mathcal E}}
\def\cF{{\mathcal F}}
\def\cG{{\mathcal G}}
\def\cH{{\mathcal H}}
\def\cI{{\mathcal I}}
\def\cJ{{\mathcal J}}
\def\cK{{\mathcal K}}
\def\cL{{\mathcal L}}
\def\cM{{\mathcal M}}
\def\cN{{\mathcal N}}
\def\cO{{\mathcal O}}
\def\cP{{\mathcal P}}
\def\cQ{{\mathcal Q}}
\def\cR{{\mathcal R}}
\def\cS{{\mathcal S}}
\def\cT{{\mathcal T}}
\def\cU{{\mathcal U}}
\def\cV{{\mathcal V}}
\def\cW{{\mathcal W}}
\def\cX{{\mathcal X}}
\def\cY{{\mathcal Y}}
\def\cZ{{\mathcal Z}}

\def\C{\mathbb{C}}
\def\F{\mathbb{F}}
\def\K{\mathbb{K}}
\def\G{\mathbb{G}}
\def\Z{\mathbb{Z}}
\def\R{\mathbb{R}}
\def\Q{\mathbb{Q}}
\def\N{\mathbb{N}}
\def\M{\textsf{M}}
\def\U{\mathbb{U}}
\def\P{\mathbb{P}}
\def\A{\mathbb{A}}
\def\p{\mathfrak{p}}
\def\n{\mathfrak{n}}
\def\X{\mathcal{X}}
\def\x{\textrm{\bf x}}
\def\w{\textrm{\bf w}}
\def\ovQ{\overline{\Q}}
\def\rank#1{\mathrm{rank}#1}
\def\wf{\widetilde{f}}
\def\wg{\widetilde{g}}
\def\comp{\hskip -2.5pt \circ  \hskip -2.5pt}
\def\({\left(}
\def\){\right)}
\def\[{\left[}
\def\]{\right]}
\def\<{\langle}
\def\>{\rangle}

\def\gen#1{{\left\langle#1\right\rangle}}
\def\genp#1{{\left\langle#1\right\rangle}_p}
\def\genPs{{\left\langle P_1, \ldots, P_s\right\rangle}}
\def\genPsp{{\left\langle P_1, \ldots, P_s\right\rangle}_p}

\def\e{e}

\def\eq{\e_q}
\def\fh{{\mathfrak h}}

\def\lcm{{\mathrm{lcm}}\,}

\def\l({\left(}
\def\r){\right)}
\def\fl#1{\left\lfloor#1\right\rfloor}
\def\rf#1{\left\lceil#1\right\rceil}
\def\mand{\qquad\mbox{and}\qquad}

\def\jt{\tilde\jmath}
\def\ellmax{\ell_{\rm max}}
\def\llog{\log\log}

\def\m{{\rm m}}
\def\ch{\hat{h}}
\def\GL{{\rm GL}}
\def\Orb{\mathrm{Orb}}
\def\Per{\mathrm{Per}}
\def\Preper{\mathrm{Preper}}
\def \S{\mathcal{S}}
\def\vec#1{\mathbf{#1}}
\def\ov#1{{\overline{#1}}}
\def\Gal{{\rm Gal}}

\newcommand{\bfalpha}{{\boldsymbol{\alpha}}}
\newcommand{\bfomega}{{\boldsymbol{\omega}}}

\newcommand{\Ch}{{\operatorname{Ch}}}
\newcommand{\Elim}{{\operatorname{Elim}}}
\newcommand{\proj}{{\operatorname{proj}}}
\newcommand{\h}{{\operatorname{h}}}

\newcommand{\hh}{\mathrm{h}}
\newcommand{\aff}{\mathrm{aff}}
\newcommand{\Spec}{{\operatorname{Spec}}}
\newcommand{\Res}{{\operatorname{Res}}}

\numberwithin{equation}{section}
\numberwithin{theorem}{section}

\def\house#1{{%
    \setbox0=\hbox{$#1$}
    \vrule height \dimexpr\ht0+1.4pt width .5pt depth \dimexpr\dp0+.8pt\relax
    \vrule height \dimexpr\ht0+1.4pt width \dimexpr\wd0+2pt depth \dimexpr-\ht0-1pt\relax
    \llap{$#1$\kern1pt}
    \vrule height \dimexpr\ht0+1.4pt width .5pt depth \dimexpr\dp0+.8pt\relax}}

\title[]
{On algebraic integers of bounded house and preperiodicity in polynomial semigroup dynamics}

\author[A. Ostafe] {Alina Ostafe}
\address{School of Mathematics and Statistics, University of New South Wales, Sydney NSW 2052, Australia}
\email{alina.ostafe@unsw.edu.au}

\author[M. Young] {Marley Young}
\address{School of Mathematics and Statistics, University of New South Wales, Sydney NSW 2052, Australia}
\email{marley.young@unsw.edu.au}

%


%
\subjclass[2010]{11R18, 37F10}

\begin{abstract} 
We consider semigroup dynamical systems defined by several polynomials over a number field $\K$, and the orbit (tree) they generate at a given point. We obtain finiteness results for the set of preperiodic points of such systems that fall in the cyclotomic closure of $\K$. More generally, we consider the finiteness of initial points in the cyclotomic closure for which the orbit contains an algebraic integer of bounded house. This work extends previous results for classical obits generated by one polynomial over $\K$ obtained initially by Dvornicich and Zannier (for preperiodic points), and then by Chen and Ostafe (for roots of unity and elements of bounded house in orbits).
\end{abstract}

\maketitle

\section{Introduction and statements}

\subsection{Motivation}

Let $\K$ be a field and let $\cF=\{f_1,\ldots,f_s\}\subset\K(X)$. We define recursively
\begin{equation}
\label{eq:tree}
\cF_1=\cF,\quad \cF_k=\bigcup_{i=1}^s f_i(\cF_{k-1}), \ k\ge 2,
\end{equation}
where $f_i(\cF_{k-1})=\{f_i \circ g \mid g\in\cF_{k-1}\}$, $i=1,\ldots,s$.

In other words, $\cup_{k\ge 1}\cF_k$ is the semigroup generated by $f_1,\ldots,f_s$ under composition. Accordingly, we shall speak of a {\it semigroup dynamical system}.

For an element $\alpha\in\ov\Q$, we define 
\begin{equation}
\label{eq:tree alpha}
\cF_k(\alpha)=\{f(\alpha)\mid f\in\cF_{k}\}, \quad k\ge 1,
\end{equation}
and the {\it orbit} of $\cF$ at $\alpha$ by
\begin{equation}
\label{eq:orb}
\Orb_\cF(\alpha)=\bigcup_{k\ge 0} \cF_k(\alpha).
\end{equation}

We note that for $s=1$, $\cF_k(\alpha)$ is just $k$-th iterate of $f_1$ at $\alpha$, that is, $f_1^{(k)}(\alpha)$, where $f_1^{(k)}$ represents the $k$-th compositional iterate of $f_1$.
Moreover, in this case, $\Orb_\cF(\alpha)=\{f_1^{(k)}(\alpha) \mid k\ge 0\}$ is just the classical forward orbit of $f_1$ at $\alpha$.

For $s=2$, $\Orb_\cF(\alpha)$ can be seen as the binary tree represented below, in which the level $k\ge 1$ is given by the set $\cF_k(\alpha)$.
\begin{figure}[h]
\setlength{\unitlength}{1cm}
\vskip 3pt 
\begin{picture}(170,1.6)(9,1.2)
\put(15.1,2.9){$\alpha$}
\put(15.0,2.9){\vector(-4,-1){0.5}}
\put(15.4,2.9){\vector(4,-1){0.5}}
\put(13.5,2.5){$f_1(\alpha)$}  
\put(16.0,2.5){$f_2(\alpha)$}  
\put(13.6,2.3){\vector(-3,-1){0.5}}
\put(13.8,2.3){\vector(3,-1){0.5}}
\put(16.4,2.3){\vector(-3,-1){0.5}}
\put(16.6,2.3){\vector(3,-1){0.5}}
\put(12.0,1.7){$f_1\comp f_1(\alpha)$}  
\put(13.7,1.7){$f_2 \comp f_1(\alpha)$}  
\put(15.3,1.7){$f_1 \comp f_2(\alpha)$}  
\put(16.9,1.7){$f_2\comp f_2(\alpha)$}  
\put(12.8,1.5){\vector(3,-1){0.5}}
\put(12.6,1.5){\vector(-3,-1){0.5}}
\put(14.2,1.5){\vector(3,-1){0.5}}
\put(14,1.5){\vector(-3,-1){0.5}}
\put(15.8,1.5){\vector(-3,-1){0.5}}
\put(16.0,1.5){\vector(3,-1){0.5}}
\put(17.2,1.5){\vector(-3,-1){0.5}}
\put(17.4,1.5){\vector(3,-1){0.5}}
\put(11.9,1.2){\ldots}
\put(13.2,1.2){\ldots}
\put(14.8,1.2){\ldots}
\put(16.4,1.2){\ldots}
\put(17.6,1.2){\ldots}
\end{picture}
\end{figure}

The study of semigroup dynamical systems goes back to work of Silverman~\cite{Si93} who investigated the finiteness of $S$-integers in orbits generated by a finite set of rational functions, and more recently appears in several works addressing important questions in arithmetic dynamics  such as the Mordell-Lang Conjecture for endomorphisms of semiabelian varieties~\cite{GTZ1}, the orbit intersection problem~\cite{GN,GTZ}, see also~\cite{YZ}, the  irreducibility of compositions of polynomials~\cite{FGS,GMS,HBM}.

In this paper we restrict ourselves to the case when $f_1,\ldots,f_s \in \K[X]$.

We  reserve $|\alpha|$ for  the usual  absolute value of  $\alpha \in \C$ and use
$\house{\strut\alpha}$ for the {\it house\/} of $\alpha$, which is the maximum of absolute values $|\sigma(\alpha)|$ 
of the conjugates $\sigma(\alpha)$ over $\Q$ of   $\alpha\in \ovQ$.

In this paper we are looking at the presence of algebraic integers of bounded house in the sets~\eqref{eq:tree alpha} for univariate polynomials defined over a number field $\K$, generalising recent work of~\cite{Chen,DZ,Ost}. In particular, we prove that for any $A \geq 1$, unless the polynomials $f_1,\ldots,f_s$ are special (in a sense to be defined in the next section), there are at most finitely many elements $\alpha$ in the cyclotomic closure $\K^c$ such that the sets $\cF_k(\alpha)$ contain at least one algebraic integer of bounded by $A$ house for any $k\ge 1$.

A special case of elements of bounded house is given by {\it preperiodic points} of a given polynomial. In this case, several points of view can be adopted in generalising the notion of preperiodicity from classical dynamics ($s=1$): one in which two elements are equal along a connected path in the tree and one in which two elements are equal in the whole tree at different levels. For precise definitions see Section~\ref{sec:per} below.

The initial motivation of this work is a result of Dvornicich and Zannier~\cite[Theorem~2]{DZ}  on  the finiteness of preperiodic points of a univariate polynomial defined over a  number field $\K$, falling in the cyclotomic closure $\K^c$; see also~\cite[Theorem~34]{KaSi} for a multivariate case. More concretely,  they prove that a polynomial $f\in\K[X]$ has only finitely many preperiodic points in $\K^c$, unless $f$ is conjugated to monomials or Chebyshev polynomials. We shall obtain a similar result in the case of semigroup dynamics.

Our approach extends that of~\cite{Ost}, and its generalisation due to Chen~\cite{Chen}. Namely, it utilizes~\cite[Theorem~2.5]{Chen}, which follows the ideas and technique of~\cite{DZ}, and is based on the toric analogue of the Manin--Mumford conjecture  (also called the {\it torsion points theorem}) proved by Laurent~\cite{Laurent}  
(a more elementary proof was also given by Sarnak and Adams~\cite{SA}),  and an extension of Loxton's result~\cite[Theorem~1]{Loxt} in representing cyclotomic integers as short combination of roots of unity~\cite[Theorem~L]{DZ}. This is used in conjunction with various bounds on valuations of elements in (semigroup) orbits.

Our  arguments stem  from the previous investigations~\cite{Chen,DZ,Ost} that apply to classical orbits defined by just one univariate polynomial, however they require some modifications, which are 
not always straight-forward and present new technical challenges. We believe that the methods and auxiliary results we develop here   can be  of use for some other questions on polynomial 
semigroups. Thus, we also outline some open problems and future research directions.

We conclude by noting that studying points of bounded house in orbits of
algebraic dynamical systems can be seen as a Bogomolov-type generalisation
of studying the set of preperiodic points. In this context, we note the Dynamical Bogomolov Conjecture~\cite[Conjecture 3.62]{Silv}, generalising the Dynamical Manin-Mumford Conjecture~\cite[Conjecture 3.58]{Silv}, about points of small canonical height on non-preperiodic algebraic varieties. This point of view also appears in `unlikely intersections' problems in arithmetic dynamics, where for example, instead of looking at simultaneous preperiodic points for two dyanmical systems, one looks at common points of small canonical height, see~\cite[Conjectures 11.3 and 15.4]{BdMIJMST} and references therein.

\subsection{Notation, conventions and definitions} 
\label{sec:def}

We use the following notations:
\begin{itemize}
\item[$\quad \diamond$] $\ovQ$: the  algebraic  closure of $\Q$;
\item[$\quad \diamond$] $\U$: the set of all roots of unity in $\C$; 
\item[$\quad \diamond$] $\K$: a number field; 
\item[$\quad \diamond$] $\K^c=\K(\U)$: the cyclotomic closure of $\K$; 
\item[$\quad \diamond$] $\cF=\{f_1,\ldots,f_s\}$: a set of distinct polynomials $f_i\in\K[X]$ of degrees $d_i\ge 2$, $i=1,\ldots,s$;
\item[$\quad \diamond$] $\cH_A$: the set of all algebraic integers of house at most $A$;
\item[$\quad \diamond$] $T_d$: the Chebyshev polynomial of degree $d$; it is uniquely determined by the equation $T_d(x+x^{-1})=x^d+x^{-d}$.
\end{itemize}

We recall that by the celebrated result of Kronecker any algebraic integer of house $1$ is a root of unity,
that is, $\cH_1 = \U$.

For $A \geq 1$ and a set of polynomials $\cF=\{f_1,\ldots,f_s\} \subset \K[X]$, we define
\begin{equation}
\label{eq:SA}
\cS_A(\cF) = \left \{ \alpha \in \K^c \mid \bigcup_{k \geq 1} \cF_k(\alpha) \cap \cH_A \neq \emptyset \right \}.
\end{equation}

Moreover, for $i_1\ldots,i_k\in\{1,\ldots,s\}$, we use the notation
\begin{equation}
\label{eq:fk}
f_{i_1\ldots i_k}=f_{i_k}\circ f_{i_{k-1}} \circ \cdots \circ f_{i_1},
\end{equation}
and define $f_{i_1 \ldots i_k}(X)=X$ if $k=0$.

\begin{definition} [\sl{Special set of polynomials}]
\label{def:Srf Sys}
Let $\cF=\{f_1,\ldots,f_s\} \subset \K[X]$ be a set of polynomials with $\deg f_i=d_i$, $i=1,\ldots,s$. We call the set  $\cF$   {\sl special} if  there exist linear polynomials $\ell, \ell_1,\ell_2\in\ov\Q[X]$ such that at least one of the following conditions  is satisfied:
\begin{itemize}
\item For some $i=1,\ldots,s$, one has
$$f_i=\ell \circ X^{d_i} \circ \ell^{-1} \quad \textrm{or} \quad f_i=\ell \circ (\pm T_{d_i}) \circ \ell^{-1};$$ 
\item For some $1\le i\ne j\le s$, one has
$$
f_i=\ell_1\circ X^{d_i}\circ \ell \quad \textrm{and} \quad f_j=\ell^{-1}\circ X^{d_j}\circ \ell_2
$$
or
$$
f_i=\ell_1\circ T_{d_i}\circ \ell \quad \textrm{and} \quad f_j=\ell^{-1}\circ T_{d_j}\circ \ell_2.
$$
\end{itemize}

If none of these conditions is satisfied for any linear polynomials $\ell, \ell_1,\ell_2\in\ov\Q[X]$, then we call the set $\cF$  {\sl non-special}. 
\end{definition}

\begin{remark} 
We note that the second condition of Definition~\ref{def:Srf Sys} is equivalent, by~\cite[Corollary 2.9]{MZ}, to saying that there exist linear polynomials $\ell_1,\ell_2\in\ov\Q[X]$ such that $\ell_1\circ (f_i\circ f_j)\circ\ell_2$ is either $X^{d_id_j}$ or $T_{d_id_j}(X)$.
\end{remark}
\begin{remark} 
We also note that for $d_i>2$, $i=1,\ldots,s$, we can simplify Definition~\ref{def:Srf Sys} by requesting 
only the second condition also for $i=j$. Indeed, this follows from Lemma~\ref{lem:mon Cheb} below (which in turn follows from~\cite[Lemma 3.13]{MZ} or~\cite[Lemma 3.9]{GTZ}).
\end{remark}

\subsection{Periodicity and preperiodicity in semigroup dynamics}
\label{sec:per}

Let $\cF=\{f_1,\ldots,f_s\}\subset\K[X]$. In this section we define  the notion of {\it preperiodic} point in the orbit (tree) $\Orb_\cF(\alpha)$ defined by~\eqref{eq:orb} of an element $\alpha\in\ov\Q$, extending the usual notion from classical dynamics generated by only one polynomial. In the semigroup case, there can be multiple ways to interpret this notion. A first notion of periodicity was considered in~\cite{Kaw} where a point $\alpha$ is said to be periodic if $\Orb_\cF(\alpha)$ is finite. We consider below two more general definitions. 

For this we recall the notation~\eqref{eq:tree alpha}.

Firstly, one can view preperiodic points as those $\alpha\in\ov\Q$ such that $$f_{i_1 \ldots i_k}(\alpha) \in \cF_\ell(f_{i_1 \ldots i_k}(\alpha))$$ for some $k \geq 0$, $\ell \ge 1$ and $i_1,\ldots,i_k \in \{1,\ldots,s\}$. That is, two elements in a given connected path in the tree of $\alpha$ are equal, or in other words, some element in the orbit of $\alpha$ is ``periodic".

We denote by $\Pi(\cF)$  the set of $\alpha\in\ov\Q$ being preperiodic in this sense, that is,
\begin{equation}
\label{eq:p1}
\Pi(\cF)=\{\alpha\in\ov\Q \mid f_{i_1 \ldots i_k}(\alpha) \in \cF_\ell(f_{i_1 \ldots i_k}(\alpha)) \textrm{ for some $k\ge 0, \ell\ge 1$}\}.
\end{equation}

Secondly, and more generally, one can call $\alpha$ a preperiodic point if we have an arbitrary collision (at different levels in the tree) $$\cF_k(\alpha) \cap \cF_n(\alpha) \neq \emptyset$$ for some $k \neq n$.

We denote by $\ov\Pi(\cF)$ the set of $\alpha\in\ov\Q$ being preperiodic in this second sense, that is,
\begin{equation}
\label{eq:p2}
\ov\Pi(\cF)=\{\alpha\in\ov\Q \mid \cF_k(\alpha) \cap \cF_n(\alpha) \neq \emptyset \textrm{ for some $k\ne n$}\}.
\end{equation}
We have the inclusion $\Pi(\cF)\subseteq \ov\Pi(\cF)$.

\subsection{Main results}
\label{sec:main}

Our first aim is to describe sets of polynomials $\cF=\{f_1,\ldots,f_s\}$ defined over $\K$ for which the set $\cS_A(\cF)$ defined by~\eqref{eq:SA} is finite. The key condition for our results is that the images of each $f_i$ have finite intersection with $\cH_A$ (or in other words that $\cF(\K^c)\cap \cH_A$ is finite). We later discuss (see  Lemma~\ref{lem:chen}~and~Remark~\ref{rem:PAAvoid}) when this is satisfied. In particular, we see from Remark~\ref{rem:PAAvoid} that  this finiteness condition is generically 
satisfied by all polynomials of sufficiently large degree.

\begin{theorem}
\label{thm:KcOrb}
Let $\cF=\{f_1,\ldots,f_s\} \subset \K[X]$ be a non-special set of polynomials of respective degrees $d_i \ge 2$, and let $A \ge 1$ be a real number. If $f_i(\K^c) \cap \cH_A$ is finite for $i=1,\ldots,s$, then so is $\cS_A(\cF)$ defined by~\eqref{eq:SA}.
\end{theorem}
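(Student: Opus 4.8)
The plan is to reduce the semigroup statement to the classical ($s=1$) result \cite[Theorem~2.5]{Chen}, exploiting the tree structure. Suppose $\alpha \in \cS_A(f_1,\ldots,f_s)$, so there is some word $f_{i_1\ldots i_k}$ with $f_{i_1\ldots i_k}(\alpha) \in \cH_A$. First I would handle the base of the recursion: the hypothesis that $f_i(\K^c)\cap\cH_A$ is finite for each $i$ means that if $k\ge 1$, then $f_{i_1\ldots i_k}(\alpha) = f_{i_k}\bigl(f_{i_1\ldots i_{k-1}}(\alpha)\bigr)$ lies in a finite set $E$ depending only on $f_1,\ldots,f_s$ and $A$ (note $f_{i_1\ldots i_{k-1}}(\alpha)\in\K^c$ since $\alpha\in\K^c$ and the $f_i$ have coefficients in $\K$). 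So it suffices to bound the number of $\alpha\in\K^c$ for which some word $f_{i_1\ldots i_{k-1}}$ maps $\alpha$ into the fixed finite set $E$ (the case $k=0$ just gives $\alpha\in\cH_A\cap\K^c$, a finite set by Northcott-type considerations since $\K^c$ has bounded local degrees — or one absorbs it into $E$).

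The heart of the argument is then: for a fixed $\beta \in E$, the set of $\alpha \in \K^c$ such that $f_{i_1\ldots i_m}(\alpha) = \beta$ for \emph{some} word of \emph{some} length $m\ge 0$ is finite, provided $\{f_1,\ldots,f_s\}$ is non-special. I would try to prove this by an inductive/descent argument on the word length together with a pigeonhole over which generator is applied first. The key sub-claim I would aim for is a dichotomy: either the set $\bigcup_m \{\alpha\in\K^c : f_{i_1\ldots i_m}(\alpha)=\beta \text{ for a length-}m\text{ word}\}$ is finite, or one of the generators $f_i$ — or one of the two-fold compositions $f_i\circ f_j$ — must be of the special (monomial/Chebyshev up to conjugacy) form, because it has infinitely many $\K^c$-points in its backward orbit structure. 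Here is where \cite[Theorem~2.5]{Chen} enters: it says precisely that for a single polynomial $g$ of degree $\ge 2$ not conjugate to a monomial or $\pm$Chebyshev, the set of $\alpha\in\K^c$ whose forward $g$-orbit meets $\cH_A$ is finite; applied with the finite target set, this controls the ``straight'' branches $f_i^{(m)}$.

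The main obstacle — and the genuinely new content beyond \cite{Chen,Ost} — is dealing with \emph{mixed} words $f_{i_1\ldots i_m}$ where the generators alternate, since these are not iterates of any single polynomial and \cite[Theorem~2.5]{Chen} does not directly apply. My strategy here would be: group the word into blocks and observe that any sufficiently long word contains either a long run of a single generator (handled by the single-polynomial result) or many alternations $f_i\circ f_j\circ f_i\circ f_j\circ\cdots$; in the latter case $(f_i\circ f_j)$ is itself a polynomial of degree $d_id_j\ge 4$ and one applies the single-polynomial result to $g = f_i\circ f_j$. The non-special hypothesis is exactly engineered (via Definition~\ref{def:Srf Sys} and the Remark invoking \cite[Corollary~2.9]{MZ}) so that if \emph{no} $f_i$ and \emph{no} composition $f_i\circ f_j$ is conjugate to a monomial or Chebyshev polynomial, then none of these auxiliary single-polynomial systems is exceptional, and finiteness propagates. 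I would need to be careful about uniformity: the number of words and blocks grows, so I must ensure the finite bounds obtained at each stage can be assembled into a single finite bound — this should follow because there are only finitely many generators, finitely many two-fold compositions, and the target set $E$ is finite, so only finitely many invocations of the single-polynomial theorem are needed, after which a combinatorial argument on the tree (bounding the depth at which a collision with $E$ can first occur in terms of heights) closes the proof.
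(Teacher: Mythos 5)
Your opening reduction (fix a finite target set coming from $f_i(\K^c)\cap\cH_A$, then study preimages under words) matches the paper's first step, but the core of your plan has two genuine gaps, and you never invoke the tool the paper actually uses to close the argument.

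First, the combinatorial dichotomy you rely on --- that any sufficiently long word over $\{f_1,\ldots,f_s\}$ either contains a long run of a single generator or a long strict alternation $f_i\circ f_j\circ f_i\circ f_j\cdots$ --- is false. Already for $s=2$ the word $f_1f_1f_2f_2f_1f_1f_2f_2\cdots$ has bounded runs and no long strict $f_if_j$-alternation; for $s\ge 3$ one can cycle through three generators. So most mixed words are not iterates of a single polynomial, nor of a single two-fold composition, and Chen's single-polynomial orbit theorem does not apply to them. Relatedly, you treat \cite[Theorem~2.5]{Chen} (the paper's Lemma~\ref{lem:chen}) as an orbit-finiteness statement for one polynomial; it is not --- it is a one-step statement giving a Laurent-polynomial parametrization of $\{\alpha\in\K^c:f(\alpha)\in\cH_A\}$ when this set is infinite. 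The orbit theorem for $s=1$ is a separate consequence, and would anyway only cover straight or two-periodic branches.

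Second, even setting aside the dichotomy, your plan has no mechanism to bound uniformly the depth $k$ at which a collision with the target set can first occur, yet this is the crux: the number of length-$k$ words grows like $s^k$, and for varying $\alpha$ the witnessing $k$ can be arbitrarily large, so ``finitely many invocations'' of a finiteness theorem does not assemble into a single bound. The paper gets this uniform depth bound from a quantitative lower bound on the number of terms in a decomposable Laurent polynomial (Fuchs--Zannier, Lemma~\ref{lem:FZ}): it applies Lemma~\ref{lem:chen} to the full composite $D f_{i_1\ldots i_M}$ of a \emph{fixed} length $M$ (after using Lemma~\ref{lem:algint} and Corollary~\ref{cor:growhouse} to make $Df_{i_1\ldots i_M}(\alpha)$ an algebraic integer of bounded house), obtains a short Laurent-polynomial identity $D f_{i_1\ldots i_M}(S(X))=\sum\beta_ie_iX^{n_i}$, and then Lemma~\ref{lem:FZ} forces $2^{M-3}\le 4L_\K(DL)^2$ unless $S$, $f_{i_1}\circ S$, $f_{i_1i_2}\circ S$, $f_{i_1i_2i_3}\circ S$ all have the special form $aX^n+bX^{-n}+c$; that residual case is then analysed by hand and shown (via Lemma~\ref{lem:mon Cheb}) to contradict non-specialness. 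None of these ingredients --- the fixed cutoff $M$, the integrality and house bounds for $f_{i_1\ldots i_M}(\alpha)$, the Fuchs--Zannier term-count bound, and the trinomial case analysis --- appears in your proposal, and without them the argument does not close.
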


\begin{remark}
If $s=1$ in Theorem~\ref{thm:KcOrb}, we obtain the result of~\cite{Chen}.
\end{remark}

Using the cyclotomic version of the Hilbert's Irreducibility Theorem due to Dvornicich and Zannier~\cite[Corollary 1]{DZ}, one can deduce immediately from Theorem~\ref{thm:KcOrb} the following conclusion about the presence of roots of unity in orbits of semigroup dynamics, which extends~\cite[Theorem 1.2]{Ost} when $s=1$.

\begin{corollary}
\label{cor:r o u}
Let $\cF=\{f_1,\ldots,f_s\} \subset \K[X]$ be a non-special set of polynomials of respective degrees $d_i \ge 2$, such that $f_i(X)-Y^{m_i}$ as polynomials in $X$ do not have a root in $\K^c(Y)$ for all $m_i\le d_i$ and $i=1,\ldots,s$. Then $\cS_1(\cF)$ is finite.
\end{corollary}

We also have results analogous to~\cite[Theorem~2]{DZ}, which consider the finiteness of preperiodic points in the semigroup case falling in $\K^c$.

\begin{theorem}
\label{thm:KcPreper}
Let $\cF=\{f_1,\ldots,f_s\} \subset \K[X]$ be a non-special set of polynomials of respective degrees $d_i \ge 2$. Then $\Pi(\cF)\cap \K^c$ is finite, where $\Pi(\cF)$ is defined by~\eqref{eq:p1}.
\end{theorem}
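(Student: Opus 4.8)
The plan is to follow the strategy of Dvornicich--Zannier for a single polynomial~\cite[Theorem~2]{DZ}, adapted to the semigroup setting, with the technical core being the same as for Theorem~\ref{thm:KcOrb}: \cite[Theorem~2.5]{Chen}, the theorem on cyclotomic points on subvarieties of tori~\cite[Theorem~1]{DZ}, the short‑sum representation of cyclotomic integers~\cite[Theorem~L]{DZ}, and estimates for valuations of elements of (forward and backward) semigroup orbits. The first step is a reduction: if $\alpha\in\Pi\cap\K^c$, then by~\eqref{eq:p1} some $\beta:=f_{i_1\ldots i_k}(\alpha)$ satisfies $f_{j_1\ldots j_\ell}(\beta)=\beta$ with $\ell\ge 1$, and since $f_{i_1\ldots i_k}\in\K[X]$ we have $\beta\in\K^c$. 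Writing $\cP\subseteq\ov\Q$ for the set of semigroup‑periodic points, it therefore suffices to prove that (i) $\cP\cap\K^c$ is finite, and (ii) for each $\beta\in\cP\cap\K^c$, the union over all words of the preimage sets $f_{i_1\ldots i_k}^{-1}(\beta)\cap\K^c$ is finite.

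Next I would establish uniform house bounds. Comparing leading terms yields $R_0=R_0(f_1,\ldots,f_s)$ with $|z|>R_0\Rightarrow|f_i(z)|>|z|$ for all $i$, hence $|f_{i_1\ldots i_k}(z)|>|z|$ along any nonempty word; so every $\beta\in\cP$ has $|\beta|\le R_0$, and applying this to the finitely many Galois conjugates $\{\sigma(f_1),\ldots,\sigma(f_s)\}$, $\sigma\in\Gal(\ov\Q/\Q)$, bounds $\house{\beta}\le A_0$ for all $\beta\in\cP$. A similar, easier estimate — preimages under a degree‑$\ge 2$ polynomial of a bounded set are again bounded, with a bound uniform under iteration — shows that every point reachable from $\beta$ along a backward path has house at most a single constant $A_1$. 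Thus all the sets in (i) and (ii) lie in the set of cyclotomic numbers of house $\le A_1$. These need not be algebraic integers; but if $f_{j_1\ldots j_\ell}(\beta)=\beta$ and $c$ is the leading coefficient of $f_{j_1\ldots j_\ell}(X)-X$, then $c\beta$ is an algebraic integer and again has bounded house, and similarly along backward paths, so one tracks this clearing of denominators throughout.

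For (i) one then feeds these points into the machinery behind Theorem~\ref{thm:KcOrb}, with $\cF_k(\alpha)$ replaced by the cycle of $\beta$: by~\cite[Theorem~L]{DZ} the relevant $c\beta$ is a sum of at most $N=N(A_0)$ roots of unity, and $f_{j_1\ldots j_\ell}(\beta)=\beta$ rewrites as a polynomial relation among them; the valuation analysis of this relation (as in~\cite{Chen,Ost}), together with the fact that non‑specialness of $\{f_1,\ldots,f_s\}$ propagates to every composition $f_{j_1\ldots j_\ell}$ via the decomposition theory of~\cite{MZ} underlying Definition~\ref{def:Srf Sys}, bounds the orders of the roots of unity that can occur; hence $\cP\cap\K^c$ sits inside a fixed cyclotomic field intersected with the bounded‑house set and is finite. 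Any residual degenerate solutions are confined by~\cite[Theorem~1]{DZ} to finitely many positive‑dimensional torsion cosets, on which $f_{j_1\ldots j_\ell}$ would have infinitely many fixed points — impossible. For (ii), if a backward orbit in $\K^c$ were infinite, a standard compactness argument (finitely many $f_i$, each with finitely many preimages) yields an infinite simple backward path $\beta=\gamma_0,\gamma_1,\ldots$ in $\K^c$ with $f_{a_t}(\gamma_t)=\gamma_{t-1}$ and $\house{\gamma_t}\le A_1$, along which some $f_a$ repeats infinitely often; tracking a valuation $v_\mathfrak{p}$ at a prime of bad reduction of $f_a$ through the relation $f_a(\gamma_t)=\gamma_{t-1}$ forces $v_\mathfrak{p}(\gamma_t)$ (or of the relevant denominator multiple) outside the range compatible with a short sum of roots of unity of bounded order, again a contradiction using non‑specialness of $f_a$.

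I expect the main obstacle to be step~(i), specifically that the word length $\ell$ — hence the degree of $f_{j_1\ldots j_\ell}$ — is unbounded a priori, so the valuation estimates and the resulting bound on the orders of the roots of unity must be made uniform in the word, rather than applied polynomial‑by‑polynomial; \cite[Theorem~1]{DZ} applied per word gives nothing beyond ``$\le\deg f_{j_1\ldots j_\ell}$ fixed points''. This is exactly why Theorem~\ref{thm:KcPreper} is not a formal consequence of Theorem~\ref{thm:KcOrb}: already for $f=X^2+X$ with $s=1$ the hypothesis that $f_i(\K^c)\cap\cH_A$ be finite fails (it contains $\zeta^2+\zeta$ for all roots of unity $\zeta$), while $\Pi\cap\K^c$ is nonetheless finite. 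A secondary nuisance, handled by the denominator bookkeeping above, is that periodic points of non‑monic $f_i$ need not be algebraic integers, so \cite[Theorem~L]{DZ} must be applied to $c\beta$ and the house estimate carried through.
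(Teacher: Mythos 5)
Your reduction to (i) finiteness of the semigroup-periodic points $\cP\cap\K^c$ and (ii) finiteness of backward orbits of each such point is a clean and natural decomposition, and your uniform house bound on $\cP$ (via $|z|>R_0\Rightarrow|f_i(z)|>|z|$ applied to all embeddings of $\K$) is exactly right; the observation that the finiteness hypothesis of Theorem~\ref{thm:KcOrb} can fail while $\Pi\cap\K^c$ is still finite, illustrated by $X^2+X$, is also a good one, and the paper makes the same point in a remark. However, the decomposition is \emph{not} the route taken in the paper, and it leaves genuine gaps that the paper's more direct argument avoids.

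The main gap is in step (i), and you have in fact put your finger on it without closing it. The periodicity relation $f_{j_1\ldots j_\ell}(\beta)=\beta$ runs over words of unbounded length $\ell$, so there is no single polynomial to which one can apply~\cite[Theorem~2.5]{Chen}. Your sketch asserts that ``the valuation analysis\ldots bounds the orders of the roots of unity that can occur,'' but that is not the conclusion the machinery yields: neither Lemma~\ref{lem:chen} nor the subsequent Laurent-polynomial analysis bounds \emph{orders} of roots of unity, and you never invoke the ingredient that actually controls the word length, namely the Fuchs--Zannier result~\cite[Theorem~2]{FZ} (Lemma~\ref{lem:FZ}), which bounds the outer degree in a factorization $h=g\circ q$ of a sparse Laurent polynomial. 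A similar vagueness affects step~(ii): the K\"onig's-lemma/valuation sketch does not produce a concrete contradiction, and in fact what one would end up proving there is again, for a fixed length-$M$ prefix, that $\{\alpha\in\K^c:D f_{i_1\ldots i_M}(\alpha)\in\cH_{DL}\}$ is finite --- which is precisely what the direct argument does without the (i)/(ii) split.

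The paper's proof is more unified. It fixes in advance an integer $M$ large enough that $2^{M-3}>4L_\K(DL)^2$, observes that infinitely many $\alpha\in\Pi\cap\K^c$ must have relation length $n>M$, and pigeonholes on the first $M$ indices $i_1,\ldots,i_M$. For each such $\alpha$, whichever case $k\le M$ or $M<k<\ell+M$ occurs, one shows directly (via Lemma~\ref{lem:growabsval}, Corollary~\ref{cor:growhouse}, Lemma~\ref{lem:growcomp}) that $\house{f_{i_1\ldots i_M}(\alpha)}\le L$ and that $D\cdot f_{i_1\ldots i_M}(\alpha)$ is an algebraic integer, where $L$ and $D$ depend only on the $f_i$ and $\K$. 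This produces infinitely many $\alpha\in\K^c$ with $D\cdot f_{i_1\ldots i_M}(\alpha)\in\cH_{DL}$ \emph{for a single fixed polynomial} $f_{i_1\ldots i_M}$; then Lemma~\ref{lem:chen} forces a sparse representation $D f_{i_1\ldots i_M}(S(X))=\sum_{i=1}^{b}\beta_ie_iX^{n_i}$, Lemma~\ref{lem:FZ} gives $2^{M-3}\le\deg g\le 4L_\K(DL)^2$, and the non-specialness hypothesis is then contradicted exactly as in the proof of Theorem~\ref{thm:KcOrb}. In short: the crucial idea you are missing is to fix the \emph{prefix length} $M$ up front and let Fuchs--Zannier bound it, rather than trying to uniformize the analysis over unboundedly long periodic words or backward chains.
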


\begin{remark}
Since $\Pi(\cF)$ is a set of uniformly bounded house only in terms of $f_1,\ldots,f_s$ and $\K$ (which follows from the proof of Theorem~\ref{thm:KcPreper}), we note that Theorem~\ref{thm:KcPreper} also follows directly from Theorem~\ref{thm:KcOrb} under the condition that $f_i(\K^c) \cap \cH_A$ is finite for $i=1,\ldots,s$. However, this latter condition is not needed for achieving the finiteness of $\Pi(\cF)\cap\K^c$,  see the proof of Theorem~\ref{thm:KcPreper}.
\end{remark}

We also prove that the set $\ov\Pi(\cF)$ of preperiodic points as defined by~\eqref{eq:p2} is a set of bounded house. For our methods to work with this definition, we restrict to the case where the polynomials $f_1,\ldots, f_s$ all have the same degree, in order to control the house of an arbitrary preperiodic point.

In fact, we generalise further to points which have a semigroup iterate which can be written as a linear combination of elements in earlier levels of the tree, with coefficients of bounded house. This gives a result in the vein of Northcott's theorem, see \cite[Theorem~3.12]{Silv}.

For $A \geq 1$ and $\alpha \in \ov \Q$, we define
$$
\cL_{n,A}(\cF;\alpha) = \left\{ \sum_{k=0}^n \sum_{i_1, \ldots, i_k = 1}^s \gamma_{i_1 \ldots i_k} f_{i_1 \ldots i_k}( \alpha ) \mid \gamma_{i_1 \ldots i_k} \in \cH_{A^{d^n}} \right\},
$$
where $f_1, \ldots, f_s \in \K[X]$ all have degree $d$, and set
$$
\Sigma_A(\cF)=\{\alpha\in\ov\Q \mid \cF_n(\alpha) \cap \cL_{n-1,A}(\cF;\alpha) \neq \emptyset \textrm{ for some $n\ge 1$}\}.
$$
We have the inclusion $\Pi(\cF) \subseteq \ov \Pi(\cF) \subseteq \Sigma_A(\cF)$ for any $A \geq 1$.

\begin{theorem}
\label{thm:KcPreper2}
Let $\cF=\{f_1,\ldots,f_s\} \subset \K[X]$ be a set of polynomials of the same degree $d \ge 3$ and let $A \geq 1$. Then  $\Sigma_A(\cF)$ is a set of bounded house, and there exists a positive integer $D$, depending only on $f_1,\ldots,f_s$ and $\K$, such that $D\Sigma_A(\cF)$ is a set of algebraic integers.
\end{theorem}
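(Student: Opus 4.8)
The plan is to show that membership in $\Sigma_A$ forces a bound on the house of $\alpha$, by exploiting the fact that one of the semigroup iterates $f_{i_1\ldots i_n}$ of $\alpha$ — a polynomial of degree exactly $d^n$ with leading coefficient controlled by the leading coefficients of the $f_i$ — equals a linear combination of lower iterates $f_{j_1\ldots j_k}(\alpha)$, $k\le n-1$, each of degree at most $d^{n-1}$, with coefficients in $\cH_{A^{d^n}}$. Writing $g=f_{i_1\ldots i_n}$ and comparing with the right-hand side $P(\alpha)=\sum \gamma_{j_1\ldots j_k}f_{j_1\ldots j_k}(\alpha)$, we get $g(\alpha)-P(\alpha)=0$, a polynomial identity at $\alpha$ in which the left-hand side has degree $d^n$ (the lower-degree terms cannot cancel the leading term of $g$) while its coefficients have house bounded in terms of $f_1,\ldots,f_s$, $\K$, $A$ and $n$. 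The first step is therefore to make these coefficient bounds explicit: the leading coefficient of $g$ is a fixed nonzero algebraic number, and all coefficients of $g$ and of the $f_{j_1\ldots j_k}$ have house bounded by an explicit function of $n$, while the $\gamma$'s have house at most $A^{d^n}$; multiplying through by a suitable fixed denominator $D_0$ we may assume everything is an algebraic integer.

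The key point is that $\alpha$ is a root of a monic-up-to-scalar polynomial whose coefficients have house bounded \emph{in terms of $n$}, and we must convert this into a bound on $\house{\alpha}$ that is \emph{uniform in $n$}. This is where the hypothesis $d\ge 3$ enters, and it is the heart of the matter: the contribution to the coefficients from the $\gamma$'s grows like $A^{d^n}$, but the leading term of $g$ dominates at scale $d^n$, so after dividing by the leading coefficient the comparison of the Newton polygon / Cauchy-type root bound gives, for each archimedean place $v$, an estimate of the form $|\sigma(\alpha)|_v \le C^{1/(d^n - d^{n-1})}\cdot(\text{stuff})$, and the exponent $1/(d^n-d^{n-1}) = 1/(d^{n-1}(d-1))\to 0$ geometrically; the same works at non-archimedean places to control integrality after scaling by a fixed $D$. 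Concretely, I would apply the standard bound that any root $\beta$ of $a_m X^m + \cdots + a_0$ satisfies $|\beta|\le 2\max_j |a_{m-j}/a_m|^{1/j}$ at each place, with $m=d^n$, $a_m$ the fixed leading coefficient of $g$ (times $D_0$), and $|a_{m-j}|$ bounded by $\max(A^{d^n}, H(n))$ where $H(n)$ grows at most like a tower that is still $o$ of $d^n$ in the exponent — the upshot being $\house{\alpha}\le C$ for an absolute constant $C=C(f_1,\ldots,f_s,\K,A)$ independent of $n$, $i_1,\ldots,i_n$, and the choice of collision.

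For the integrality statement, I would run the same comparison at the non-archimedean places: from $D_0\,g(\alpha) = D_0\,P(\alpha)$ with $D_0 g$ having algebraic-integer coefficients and leading coefficient a fixed algebraic integer $c$, one sees that $c\alpha$ (or more conveniently a fixed power $c^{d^n-1}\alpha$, handled by the standard trick of multiplying the polynomial equation by $c^{m-1}$ and substituting $Y=cX$) is integral over $\Z$ once we also clear the denominators of the $\gamma$'s — but the $\gamma$'s are already algebraic integers by definition of $\cH_{A^{d^n}}$, so only the fixed denominator coming from the $f_i$ intervenes. Taking $D$ to be a fixed multiple of the relevant power of the leading coefficient and of the common denominator of the coefficients of $f_1,\ldots,f_s$ — a quantity depending only on $f_1,\ldots,f_s$ and $\K$, \emph{not} on $n$, because the leading coefficient of $f_{i_1\ldots i_n}$ is a product of fixed leading coefficients raised to bounded powers and one extracts only a bounded-index root — we conclude that $D\alpha$ is an algebraic integer for every $\alpha\in\Sigma_A$. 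The main obstacle, and the place requiring the most care, is precisely verifying that the exponent $d^n-d^{n-1}$ in the denominator of the root bound genuinely beats the $d^n$-growth in $A^{d^n}$ after one takes the $j$-th root with $j$ ranging up to $d^n$ — i.e. that $\max_j (A^{d^n})^{1/j}$ over the \emph{relevant} range of $j$ (those $j$ for which $a_{m-j}\ne 0$, which are $j\ge d^n-d^{n-1}$ since the lower iterates have degree $\le d^{n-1}$) is bounded, which is exactly $(A^{d^n})^{1/(d^n-d^{n-1})} = A^{d/(d-1)}\le A^{3/2}$ for $d\ge 3$; this is the one computation I would do carefully, and it is the reason the theorem needs $d\ge 3$ rather than $d\ge 2$.
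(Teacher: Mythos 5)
There are two genuine gaps in your proposal, both stemming from treating $g=f_{i_1\ldots i_n}$ as if it were a sparse polynomial with a fixed leading coefficient.

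First, the leading coefficient of $g$ is \emph{not} fixed: it equals $a_{i_n,d}\,a_{i_{n-1},d}^{d}\cdots a_{i_1,d}^{d^{n-1}}$, whose house grows doubly-exponentially in $n$ unless every $f_i$ is monic (and one cannot in general conjugate all the $f_i$ to be monic simultaneously). Your integrality argument, which clears this leading coefficient ``up to a bounded-index root,'' therefore does not produce an $n$-independent $D$.

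Second, and more seriously, the claim that the coefficients $a_{m-j}$ of $g-P$ vanish for $1\le j< d^n-d^{n-1}$ is false: the composition $g$ itself generically has nonzero coefficients at every degree from $0$ to $d^n$ (e.g.\ $f=X^3+X$ gives $f^{(2)}=X^9+3X^7+3X^5+2X^3+X$, with nonzero terms well above degree $d^{n-1}=3$). Your observation correctly restricts the $\gamma$-dependent part of $g-P$ to degree $\le d^{n-1}$, and you verify $(A^{d^n})^{1/j}\le A^{d/(d-1)}$ for $j\ge d^n-d^{n-1}$; but the Fujiwara bound also sees the coefficients of $g$ at small $j$, and for those the natural estimate from elementary symmetric functions of the roots is $|a_{m-j}/a_m|^{1/j}\le\binom{d^n}{j}^{1/j}R$, which for $j=1$ is $d^nR\to\infty$. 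The Cauchy-type root bound is simply too lossy here, and nothing in your sketch controls this range of $j$ uniformly in $n$. The paper avoids the issue entirely by never expanding $g$: it assumes $|\sigma(\alpha)|>K$ and propagates the archimedean growth estimate through the iterates to get a lower bound $|\sigma(f_{i_1\ldots i_n}(\alpha))|>|\sigma(\alpha)|^{d^n}/m^{2d^{n-1}}$, while the right-hand side is bounded above by $2A^{d^{n-1}}s^{n-1}|\sigma(\alpha)|^{2d^{n-1}}$; these are incompatible because $d\ge3$ gives $d^n-2d^{n-1}\ge d^{n-1}>0$. The same recursive growth argument at non-archimedean places (à la Lemma~\ref{lem:algint}, comparing $|f_{i_1\ldots i_n}(\alpha)|_v$ with $|f_{j_1\ldots j_k}(\alpha)|_v$) yields the fixed denominator $D$. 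Your intuition that $d\ge3$ enters through a margin of roughly $A^{d/(d-1)}$ is in the right spirit, but the mechanism the paper uses is the iterate-growth comparison, not a coefficient/root bound, and I do not see how to close the $j<d^n-d^{n-1}$ gap in your version without effectively reproving the growth lemma.
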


\begin{remark} (Polynomials of different degrees)
We note that, if $\deg f_i\ne \deg f_j$ for some $i\ne j$ then more conditions are needed for Theorem~\ref{thm:KcPreper2} to hold. For example, one needs to ensure that the polynomials $f_1,\ldots,f_s$ are compositionally independent, that is,
$$
f_{i_1 \ldots i_n}\ne f_{j_1 \ldots j_k}
$$
for any $k\ne n$ and any $i_1 \ldots i_n,j_1 \ldots j_k\in\{1,\ldots,s\}$. We note that when $\deg f_i=d$ for all $i=1,\ldots,s$, this is automatically satisfied since the polynomials in the equation above have different degrees.
\end{remark}

\begin{remark}
Let $f_1,f_2\in\K[X]$ be distinct polynomials. It is still of interest studying the subset of $\ov\Pi(\cF)$  defined by
$$
\{\alpha\in\ov\Q \mid f_1^{(k)}(\alpha)=f_2^{(t)}(\alpha) \textrm{ for some $k,t\in\N$, $k\ne t$}\}.
$$
In particular, Theorem~\ref{thm:KcPreper2} implies that if $\deg f_1=\deg f_2\ge 3$, then this is a set of bounded house. 

We also note that, under the condition $\deg f_1=\deg f_2\ge 2$, the above set is a set of bounded height. Indeed, it follows directly from~\cite[Theorem~3.11]{Silv} that there exist constants $C_i=C(f_i)$, $i=1,2$, such that for all $k\geq 1$ and all $\alpha\in\ov\Q$ we have
\begin{equation}\label{eq:height equiv}
d^k (h(\alpha)- C_i) \leq h(f_i^{(k)}(\alpha)) \leq d^k (h(\alpha) + C_i).
\end{equation}
Let $\alpha\in\ov\Q$ be such that $f_1^{(k)}(\alpha)=f_2^{(t)}(\alpha)$, and we may assume $k>t$. The above inequality gives
$$
d^k(h(\alpha)-C_1) \le d^t(h(\alpha)+C_2),
$$
from where one obtains that
$$
h(\alpha)\le \frac{d^t}{d^k-d^t}C_2+\frac{d^k}{d^k-d^t}C_1\le C_2+2C_1,
$$
where the last inequality holds since $d\ge 2$. 

A similar computation as above was done (using canonical heights) in the proof of~\cite[Theorem 1.7]{deMar}. Moreover,~\cite[Proposition 5.2]{deMar} shows that the above set, including also collisions at same level $k=t$ (which is in fact the most difficult case), is of bounded height for the polynomials $f_1(x)=3X^2+5$ and $f_2(X)=X^2$. 
\end{remark}

\begin{remark}{(Rational function analogues)}
One can likely extend Theorems~\ref{thm:KcOrb},~\ref{thm:KcPreper} and~\ref{thm:KcPreper2} to rational functions $h_i=f_i/g_i \in \K(X)$, $i=1,\ldots,s$, for which $\infty$ is an \emph{attracting point}. That is, $\deg f_i > \deg g_i + 1$ (see \cite{Chen, Ost}). However, as for the case $s=1$, these techniques do not seem to work when $\deg f_i \le \deg g_i + 1$.
\end{remark}

\section{Preliminaries}

\subsection{Polynomial decompositions}
In this section we describe the polynomials for which the second iterate is, up to composition on both sides with linear polynomials, a monomial or Chebyshev polynomial, which is given by~\cite[Lemma 3.9]{GTZ}. This will motivate the first condition of Definition~\ref{def:Srf Sys}. 

\begin{lemma}
\label{lem:mon Cheb}
Let $f\in\K[X]$ be of degree $d\ge 2$ and let $\ell_1,\ell_2\in\ov\Q[X]$ be linear polynomials.
\begin{itemize}
\item[(i)] If $f^{(2)}=\ell_1\circ X^{d^2}\circ\ell_2$, then there exists a linear polynomial $\ell\in\ov\Q[X]$ such that
$$
f=\ell\circ X^d\circ \ell^{-1}.
$$
\item[(ii)] If $f^{(n)}=\ell_1\circ T_{d^n}\circ\ell_2$ and $\max\{d,n\}>2$, then 
$$
f=\ell_1\circ (\pm T_d)\circ \ell_1^{-1}.
$$
\end{itemize}
\end{lemma}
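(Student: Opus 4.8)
The plan is to read off both claims from the ramification structure of $X^{d}$ and $T_{d}$ under composition; this is exactly the content of \cite[Lemma~3.9]{GTZ} (see also \cite[Lemma~3.13]{MZ}), and I sketch that argument. For (i), the point is that $X^{d^2}$ is totally ramified over $0$ and over $\infty$, and since $\ell_1,\ell_2$ are \emph{linear} we have $\ell_1(\infty)=\infty$, so the hypothesis $f^{(2)}=\ell_1\circ X^{d^2}\circ\ell_2$ says precisely that $f^{(2)}=f\circ f$ is totally ramified over $\infty$ and over the finite point $c:=\ell_1(0)$. Now $(f\circ f)^{-1}(c)=f^{-1}\bigl(f^{-1}(c)\bigr)$, and (working over $\ov\Q$, where every fibre of $f$ is non-empty) this set is a single point, of multiplicity $d^2$, only if $f^{-1}(c)=\{b\}$ and $f^{-1}(b)=\{p\}$ for single points $b,p$; that is, $f$ is totally ramified over both finite values $c$ and $b$. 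But a polynomial of degree $d\ge 2$ is totally ramified over at most one finite value, since from $f(X)-c=a(X-b)^d$ and $f(X)-c'=a(X-b')^d$ (the leading coefficient $a$ being common) one obtains a nonzero polynomial of degree $d-1$ vanishing identically unless $b=b'$ (and then $c=c'$). As $b=f^{-1}(c)$ is finite, this forces $b=c$, hence $f(X)-c=a(X-c)^d$ for some $a\neq 0$; choosing a linear $\ell(X)=\lambda X+c$ with $\lambda^{1-d}=a$ gives $f=\ell\circ X^d\circ\ell^{-1}$.

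For (ii) one starts from the composition law $T_a\circ T_b=T_{ab}$, so that $T_{d^n}=T_d^{(n)}$ and the hypothesis becomes $f^{(n)}=\ell_1\circ T_d^{(n)}\circ\ell_2$. Thus $f^{(n)}$ carries two decompositions into $n$ degree-$d$ blocks, namely $f\circ\cdots\circ f$ and $(\ell_1\circ T_d)\circ T_d\circ\cdots\circ(T_d\circ\ell_2)$, and the rigidity of polynomial decompositions — Ritt's second theorem together with the description of linear ambiguities underlying \cite[Corollary~2.9]{MZ} — lets one match these block by block: it forces $\ell_2=\ell_1^{-1}$ up to the linear self-symmetries of the normalised $T_d$, and yields $f=\ell_1\circ(\pm T_d)\circ\ell_1^{-1}$, the sign being the only non-trivial symmetry $X\mapsto -X$. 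The hypothesis $\max\{d,n\}>2$ is exactly what excludes the degenerate case $d=n=2$: for instance $\bigl(X^2+1\bigr)^{(2)}=X^4+2X^2+2$ equals $\ell_1\circ T_4\circ\ell_2$ with $\ell_1(X)=\tfrac12 X+\tfrac32$ and $\ell_2(X)=\sqrt{-1/2}\,X$, while $X^2+1$ is not linearly conjugate to $\pm T_2=\pm(X^2-2)$.

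The main obstacle sits entirely in (ii): passing from ``the two length-$n$ decompositions of $f^{(n)}$ are equivalent'' to the precise normalised statement $f=\ell_1\circ(\pm T_d)\circ\ell_1^{-1}$ — with the \emph{same} linear polynomial $\ell_1$ and the explicit sign — requires tracking the linear factor introduced at each stage and invoking the known symmetry group of $T_d$, whereas part (i) is pure fibre counting. For this reason it is cleanest to deduce the lemma directly from \cite[Lemma~3.9]{GTZ} (equivalently \cite[Lemma~3.13]{MZ}) rather than reproduce these computations.
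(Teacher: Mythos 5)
Your proof of part (i) is correct, and it takes a genuinely different route from the paper's. The paper invokes \cite[Lemma~3.9]{GTZ} to obtain the intermediate form $f=\ell_1\circ\alpha X^d\circ\ell_1^{-1}$ for some $\alpha\in\ov\Q^{*}$, and then removes $\alpha$ by conjugating $f$ into a normalised shape $X^d+a_{d-2}X^{d-2}+\cdots+a_0$ and comparing the coefficient of $X^{d-1}$ to force $a_i=0$. Your argument instead reads everything directly off the fibre structure: $f^{(2)}$ being totally ramified over the finite point $c=\ell_1(0)$ forces $f$ to be totally ramified over two finite values $c$ and $b=f^{-1}(c)$, and the elementary observation that a polynomial of degree $\ge 2$ has at most one totally ramified finite value gives $b=c$, hence $f(X)-c=a(X-c)^d$, from which the conjugating $\ell$ is read off explicitly. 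This is self-contained, avoids the citation to \cite{GTZ} for this part, and skips the normalisation step entirely; the paper's route is shorter to write but less transparent about \emph{why} the conclusion holds.

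For part (ii), both you and the paper ultimately defer to \cite[Lemma~3.9]{GTZ}; your Ritt-theoretic sketch and the $d=n=2$ counterexample are useful context but are not claimed (nor needed) as a replacement proof. One small slip: the constants in your counterexample are off. With $f(X)=X^2+1$ one has $f^{(2)}(X)=X^4+2X^2+2=\ell_1\circ T_4\circ\ell_2$ for $\ell_1(X)=\tfrac14 X+\tfrac32$ and $\ell_2(X)=\sqrt{-2}\,X$, not $\ell_1(X)=\tfrac12 X+\tfrac32$, $\ell_2(X)=\sqrt{-1/2}\,X$; with your constants $\ell_1\circ T_4\circ\ell_2(X)=\tfrac18 X^4+X^2+\tfrac52$. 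The conceptual point of the example (that $X^2+1$ is not linearly conjugate to $\pm T_2$) is of course unaffected.
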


\begin{proof}
Part (ii) is given by~\cite[Lemma 3.9]{GTZ}, so we prove only (i).
Since $f^{(2)}=\ell_1\circ X^{d^2}\circ\ell_2$, applying~\cite[Lemma 3.9]{GTZ}, we obtain that there exists an element $\alpha\in\ov\Q^*$ such that
\begin{equation}
\label{eq:alpha}
f=\ell_1\circ \alpha X^d\circ \ell_1^{-1}.
\end{equation}
Let now $\ell\in\ov\Q[X]$ such that
$$
\ell\circ f\circ \ell^{-1}(X)=X^d+a_{d-2}X^{d-2}+\cdots+a_0.
$$
Composing~\eqref{eq:alpha} with $\ell$ and $\ell^{-1}$, we obtain
$$
\ell\circ f\circ \ell^{-1}= \cL\circ \alpha X^d\circ \cL^{-1},
$$
where $\cL=\ell\circ\ell_1$. We write $\cL=uX+v$ for some $u,v\in\ov\Q$, $u\ne 0$. Thus, the equation above becomes
$$
X^d+a_{d-2}X^{d-2}+\cdots+a_0=\alpha u^{1-d}(X-v)^d+v.
$$
Comparing the coefficients of $X^{d-1}$ in the left and right hand sides above, we conclude that $v=0$, which implies that $a_i=0$, $i=0,\ldots,d-2$. We thus obtain that $\ell\circ f\circ \ell^{-1}=X^d$, which concludes this case.
\end{proof}

\subsection{Representations via linear combinations of roots of unity}
Loxton~\cite[Theorem~1]{Loxt} proved that any algebraic integer $\alpha$ contained in some cyclotomic field has a short representation as a sum of roots of unity, that is, $\alpha=\sum_{i=1}^b\zeta_i$, where $\zeta_1,\ldots,\zeta_b\in\U$, and $b\le \cR(\house{\alpha})$ for a suitable function  $\cR:\R_{+}\to \R_{+}$. We refer to such a function $\cR$ as a {\it Loxton function}.
One can take $\cR(x)\ll_{\epsilon}x^{2+\epsilon}$ (see~\cite[Theorem 1]{Loxt}).

Dvornicich and Zannier~\cite[Theorem~L]{DZ} extended the result of Loxton~\cite[Theorem~1]{Loxt} to algebraic integers contained in a cyclotomic extension of a given number field, which we present below. 

\begin{lemma}
\label{lem:loxton}
There exists a number $B$ and a finite set $E\subset \K$ with $\#E\le [\K:\Q]$  such that any algebraic integer $\alpha\in \K^c$ 
can be written as $\alpha=\sum_{i=1}^bc_i\xi_i$, where $c_i\in E$, $\xi_i\in\U$ and $b\le \# E\cdot \cR(B\house{\alpha})$, where $\cR:\R\to \R$ is any Loxton function.
\end{lemma}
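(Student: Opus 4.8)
The plan is to reduce the statement over $\K^c$ to Loxton's theorem over $\Q^c$ (the maximal cyclotomic extension of $\Q$), using a relative integral basis of $\K$ together with the fact that $\K^c = \K \cdot \Q^c$. First I would fix an integral basis $\omega_1, \ldots, \omega_m$ of $\cO_\K$ over $\Z$, where $m = [\K:\Q]$, and observe that since $\K \cap \Q^c$ is a number field, any $\alpha \in \K^c$ lies in $\K(\zeta)$ for a suitable root of unity $\zeta$, and the compositum $\K(\zeta)$ is a finite extension of $\Q(\zeta)$ whose degree divides $m$. The elements $\omega_1, \ldots, \omega_m$ still span $\K(\zeta)$ over $\Q(\zeta)$, though no longer freely in general; to control the denominators I would pass to a $\Q(\zeta)$-basis chosen from among the $\omega_j$ (or a fixed rescaling thereof), so that $\alpha = \sum_{j} a_j \omega_{j}$ with $a_j \in \Q(\zeta)$, and the $a_j$ are controlled linearly in the archimedean absolute values of the conjugates of $\alpha$ by Cramer's rule applied to the (conjugated) Vandermonde-type matrix of the $\omega_j$; the determinant here is essentially the discriminant of $\K$, which is a constant depending only on $\K$. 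The key point is that the $a_j$ are \emph{algebraic numbers in $\Q^c$} (not necessarily integers) with house bounded by $c_1 \cdot \house{\alpha}$ for a constant $c_1 = c_1(\K)$, and with bounded denominator: there is a fixed positive integer $\delta = \delta(\K)$ with $\delta a_j \in \cO_{\Q^c}$ for all $j$.

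Next I would apply Loxton's theorem over $\Q^c$ to each algebraic integer $\delta a_j$: each can be written as a sum of at most $\cR(\house{\delta a_j}) \le \cR(\delta c_1 \house{\alpha})$ roots of unity. Absorbing $\delta$ and $c_1$ into a single constant $B$, we get $\delta a_j = \sum_{t} \xi_{j,t}$ with $\xi_{j,t} \in \U$ and the number of terms bounded by $\cR(B\house{\alpha})$. Substituting back,
$$
\alpha = \sum_{j=1}^{m} \frac{1}{\delta}\,\omega_j \sum_t \xi_{j,t} = \sum_{j,t} \Big(\tfrac{1}{\delta}\omega_j\Big)\xi_{j,t},
$$
which is a representation $\alpha = \sum_{i=1}^b c_i \xi_i$ with $c_i$ ranging over the finite set $E = \{\omega_1/\delta, \ldots, \omega_m/\delta\} \subset \K$, $\#E \le m = [\K:\Q]$, $\xi_i \in \U$, and $b \le m \cdot \cR(B\house{\alpha}) = \#E \cdot \cR(B\house{\alpha})$, exactly as claimed (note the statement writes $C(\alpha)$ for $\house{\alpha}$).

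The main obstacle — and the step requiring the most care — is the choice of the set $E$ and the denominator $\delta$ uniformly over \emph{all} $\alpha \in \K^c$, i.e.\ over all roots of unity $\zeta$ simultaneously. One must ensure that a single finite $E$ works no matter which cyclotomic field one lands in; this is where the structure $\K^c = \K \cdot \Q^c$ and the fact that $[\K(\zeta):\Q(\zeta)]$ divides $[\K:\Q]$ are essential, since they guarantee that the integral basis of $\K$ — after clearing a fixed denominator coming from how $\cO_\K$ sits inside $\cO_{\K(\zeta)}$ relative to $\cO_{\Q(\zeta)}$ — serves as a generating set with bounded integral denominators in every $\K(\zeta)$. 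Verifying this uniform denominator bound (equivalently, that the relative different of $\K(\zeta)/\Q(\zeta)$ is controlled by that of $\K/\Q$) is the technical heart; once it is in place, the rest is a direct application of Loxton's theorem and bookkeeping of constants. For full details one would invoke~\cite[Theorem~L]{DZ} directly, but the argument above is the natural route to reprove it.
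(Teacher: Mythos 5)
The paper does not give its own proof of this lemma: it is a verbatim restatement of \cite[Theorem~L]{DZ}, presented as a direct citation, so there is no argument in the text to compare against. Your sketch is the natural reconstruction of the Dvornicich--Zannier argument (descend via an integral basis of $\cO_\K$, control house and denominator of the coefficients, apply Loxton's theorem over $\Q^c$ coordinatewise, then collect the roots of unity), and as far as the overall architecture goes it is in the right spirit.

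However, there is a genuine gap at exactly the point you flag as the ``technical heart,'' and the gap is not closed. When $\K\cap\Q(\zeta)\neq\Q$, the integral basis $\omega_1,\ldots,\omega_m$ becomes $\Q(\zeta)$-linearly dependent, so the coefficients $a_j$ in $\alpha=\sum_j a_j\omega_j$ are not uniquely determined, the $m\times m$ matrix $(\sigma_i(\omega_j))$ is not the one to invert, and the ``determinant $=\pm\sqrt{d_\K}$'' control via Cramer's rule no longer applies. One must choose a sub-basis of the $\omega_j$ depending on the subfield $\K\cap\Q(\zeta)$ (finitely many possibilities, since $\K$ has finitely many subfields), produce a corresponding $r\times r$ minor that is nonvanishing and bounded away from $0$ and $\infty$, and then prove the uniform integrality statement $\delta\, a_j\in\cO_{\Q(\zeta)}$ for all $\zeta$ with $\delta=\delta(\K)$ fixed --- which amounts to a conductor/different comparison between $\cO_{\K(\zeta)}$ and $\cO_\K\cdot\cO_{\Q(\zeta)}$ that is uniform in $\zeta$. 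None of this is carried out; the proposal ends by ``invoking \cite[Theorem~L]{DZ} directly,'' which is circular since that theorem is precisely the statement at hand. So the proposal is a correct plan with the decisive step left open, not a proof.
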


For each number field $\K$ and $B$ and $E$ be as in Lemma~\ref{lem:loxton}, we define the function 
\begin{equation}
\label{eq:LK}
L_\K:\R_{>0}\to \R_{>0},\quad L_\K(t)=\#E\cdot \cR(B\cdot t).
\end{equation}

We then have the following result due to Chen~\cite[Theorem~2.5]{Chen}. 

\begin{lemma}
\label{lem:chen}
Let $f \in \K(X)$ be nonconstant and let $B$ and $E$ be as in Lemma~\ref{lem:loxton}. Then 
$$
\{ \alpha \in \K^c \mid f(\alpha) \in \cH_A \}
$$
is a finite set unless there exist a nonconstant $S \in \K^c(X)$, integers $n_i$, roots of unity $\beta_i \in \U$, and $e_i \in E$ such that
$$ f(S(X))=\sum_{i=1}^b \beta_i e_i X^{n_i} ,\quad b \leq L_\K(A). $$
\end{lemma}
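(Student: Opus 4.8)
The plan is to mimic the argument that settles the roots-of-unity case $\cH_1=\U$ — which reduces the finiteness of $f^{-1}(\U)\cap\K^c$ to the solvability of $f(S(X))=X^m$ in $S\in\K^c(X)$ via the cyclotomic Hilbert irreducibility theorem of Dvornicich and Zannier \cite{DZ} — while feeding in the Loxton--Dvornicich--Zannier representation (Lemma~\ref{lem:loxton}) to pass from $\cH_1$ to a general $\cH_A$. First, since $f$ has only finitely many coefficients, I would replace $\K$ by a finite cyclotomic extension (which does not change $\K^c$) so as to assume $f\in\K(X)$; and if $\deg f=1$ the conclusion is immediate (take $S=f^{-1}(\beta e X^n)\in\K^c(X)$ for any $\beta\in\U$, $e\in E$, $n\in\Z$), so I may assume $d:=\deg f\ge 2$. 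Suppose the set in the statement is infinite. For each $\alpha$ in it, $f(\alpha)$ is an algebraic integer lying in $\K^c$ with $\house{f(\alpha)}\le A$, so Lemma~\ref{lem:loxton} writes $f(\alpha)=\sum_{i=1}^{b_\alpha}c_i\xi_i$ with $c_i\in E$, $\xi_i\in\U$ and $b_\alpha\le L_\K(A)$. Since $E$ is finite and $b_\alpha$ is uniformly bounded, a pigeonhole argument isolates an infinite subset of initial points on which $b_\alpha=b$ is constant and the coefficient vector $(c_1,\dots,c_b)\in E^b$ is fixed, only the roots of unity $\xi_i$ varying; discarding vanishing coefficients I may assume $c_1\ne 0$.

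Next I would introduce geometry. Write $f=g/h$ with $g,h\in\K[X]$ coprime and put
$$
F(Y_1,\dots,Y_b,X)=g(X)-\Bigl(\sum_{i=1}^b c_iY_i\Bigr)h(X)\in\K[Y_1,\dots,Y_b,X],
$$
so each $\alpha$ in the infinite subset, together with its $\xi_1,\dots,\xi_b$, satisfies $F(\xi_1,\dots,\xi_b,\alpha)=0$ with $\alpha\in\K^c$. Since $g(X)-Yh(X)$ is, up to a scalar, the minimal polynomial of the generic coordinate $X$ over $\ovQ(Y)$ with $Y=f(X)$ (which is transcendental over $\ovQ$ as $f$ is nonconstant), it is irreducible over $\ovQ(Y)$; substituting $Y=\sum c_iY_i$ (using $c_1\ne 0$) and passing to the subfield $\K(Y_1,\dots,Y_b)$, the polynomial $F$ is irreducible over $\K(Y_1,\dots,Y_b)$, indeed over $\ovQ(Y_1,\dots,Y_b)$. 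Moreover $\deg_X F(\xi,X)=d\ge 2$ for every $\xi$ off one proper coordinate hyperplane $\{\sum c_iY_i=\gamma\}$, and the $\xi$ on that hyperplane force $f(\alpha)=\gamma$, which occurs for at most $d$ of our $\alpha$; discarding those, I obtain infinitely many cyclotomic tuples $\xi\in\U^b$ at which $F(\xi,X)$ has degree $d\ge 2$ and a root in $\K^c$, hence is reducible over $\K^c$.

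I would then invoke the Dvornicich--Zannier theorem on cyclotomic points on varieties \cite{DZ}, in its Hilbert-irreducibility form: the set of $\xi\in\U^b$ at which $F$ specializes to a polynomial reducible over $\K^c$ lies in a finite union of torsion cosets $\omega_jH_j$ of subtori $H_j\subseteq\G_m^b$, each \emph{proper} since $F$ is irreducible already over $\ovQ(Y_1,\dots,Y_b)\supseteq\K^c(Y_1,\dots,Y_b)$. As infinitely many of our $\xi$ lie in this union, a Zariski-dense set of them lies in a single positive-dimensional torsion coset $C=\omega H$ with $r:=\dim H\ge 1$. Over the rational $\K^c$-variety $C$ the polynomial $F|_C$ cannot remain irreducible — cyclotomic Hilbert irreducibility over $C$ would then contradict density of the $\K^c$-rational fibres — and an iterated application of the same principle to the factors of $F|_C$ forces a factor of $X$-degree $1$ over $\K^c(C)$. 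This produces a rational map $S_0\in\K^c(C)$ with $f(S_0(\mathbf Y))=\sum_{i=1}^b c_iY_i$ identically on $C$. Pulling back along a surjective monomial homomorphism $\G_m^r\to H$ and then along a suitably generic one-parameter subgroup $t\mapsto(t^{k_1},\dots,t^{k_r})$, the composition $S$ of $S_0$ with the resulting parametrization of $C$ lies in $\K^c(t)$ and satisfies
$$
f(S(t))=\sum_{i=1}^b c_i\,\omega_i\,t^{n_i},\qquad n_i\in\Z,
$$
which is the asserted identity with $\beta_i=\omega_i\in\U$, $e_i=c_i\in E$ and $b\le L_\K(A)$: the map $S$ is nonconstant because not all exponent vectors of the parametrization vanish (as $\dim H\ge 1$), a generic $(k_1,\dots,k_r)$ keeping the right-hand side a nonconstant Laurent polynomial, and no merging of terms is needed so the bound $b\le L_\K(A)$ is preserved.

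The crux is the third step. The subtlety is that the variety naturally carrying our data lies in $\A^1\times\G_m^b$ and has a coordinate (namely $\alpha$) which is \emph{not} a root of unity, so the Dvornicich--Zannier theorem on cyclotomic points must be applied indirectly, through the reducibility locus of $F$ inside $\G_m^b$; and one must then extract from the resulting positive-dimensional torsion coset a genuinely \emph{rational} section $S_0$ — equivalently, show that the relevant component of the fibre product $\{f(X)=\sum c_iY_i\}\cap(\A^1\times C)$ is rational over $\K^c$ — rather than merely the existence of reducible fibres, which is where the cyclotomic Hilbert irreducibility theorem has to be used a second time (over $C$). I expect the remaining ingredients — the Loxton representation, the pigeonhole reduction, the minimal-polynomial irreducibility of $F$, and the toric parametrization of $C$ — to be routine.
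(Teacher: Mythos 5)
This lemma is \emph{cited} in the paper as a known result of Chen (\cite[Theorem~2.5]{Chen}); the paper itself supplies no proof, only the meta-comment that Chen's argument follows \cite{DZ} via the cyclotomic-points theorem \cite[Theorem~1]{DZ} and the Loxton-type representation \cite[Theorem~L]{DZ}. Your reconstruction does indeed run along exactly those lines: represent $f(\alpha)$ by Loxton, pigeonhole the coefficient vector $(c_1,\dots,c_b)\in E^b$, pass to the auxiliary polynomial $F(\mathbf Y,X)=g(X)-(\sum c_iY_i)h(X)$, and use the cyclotomic Hilbert-irreducibility mechanism to produce a rational section over a torsion coset. Your observation that $F$ is irreducible over $\ovQ(\mathbf Y)$ because $g-Yh$ is the minimal polynomial of $X$ over $\ovQ(f(X))$ and $\ovQ(Y_1,\dots,Y_b)$ is purely transcendental over $\ovQ(Y)$ (with $Y=\sum c_iY_i$) is correct, as is the handling of the degree-drop hyperplane.

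Two points deserve scrutiny in the step you yourself flag as the crux. First, ``infinitely many $\xi$ in $\omega H$'' does not by itself give Zariski density in $C=\omega H$; to get density you must pass, via \cite[Theorem~1]{DZ} applied to the closure of your $\xi$'s inside $C$, to a possibly smaller torsion coset, and only then run the argument. Second, the iterated descent --- irreducible $F|_C$ of $X$-degree $\geq 2$ must specialize irreducibly at a dense set of torsion $\xi$, forcing a factorization, then pigeonhole onto a factor that still picks up infinitely many $\K^c$-roots and recurse over a shrinking positive-dimensional torsion coset --- is sound but needs to be carried out carefully, keeping track of the fact that the coset never degenerates to a point (else only finitely many $\xi$ survive) and that the $X$-degree of the tracked irreducible factor strictly decreases, so the process terminates with a linear factor $X-S_0$, $S_0\in\K^c(C')$. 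In fact DZ's Corollary~1 is usually stated so as to deliver the rational section over the parametrizing torus directly (after a finite monomial cover), in which case the iteration can be replaced by a single invocation; you may wish to quote that stronger form to shorten the argument. Your final parametrization and the remark that term-merging can only decrease $b$, preserving $b\le L_\K(A)$, are fine.

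Overall this is a correct reconstruction of the Dvornicich--Zannier-style proof that the paper attributes to Chen; the outline is right and the only substantive work left is to make the descent over shrinking torsion cosets (or the single invocation of the stronger form of \cite[Corollary~1]{DZ}) fully rigorous.
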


Note that, if $f \in \K^c[X]$, the rational function $S$ in Lemma~\ref{lem:chen} must necessarily be a Laurent polynomial, as $f(S(X))$ can only have 0 as a pole. 

\begin{remark}
\label{rem:PAAvoid}
We note that ``most'' polynomials of large degree satisfy the condition of Theorem~\ref{thm:KcOrb}. Indeed, in the polynomial case, the following was proved in~\cite[Theorem~2.7]{Chen}: 
Let $A \geq 1$ and let $f \in \K[X]$ be nonconstant. Suppose $\deg f > (2 L_\K(A)+1)^2$. Then $f(\K^c) \cap \cH_A $ is finite unless, for some $a,b,c\in\K^c$,  $f(aX+b+cX^{-1})$ is a Laurent polynomial of length at most $L_\K(A)$  as in Lemma~\ref{lem:chen}.
\end{remark}

\subsection{The size of elements in orbits}
\label{sizegrow}
In this section we prove some useful simple facts about the size of compositions of polynomials that are similar to the ones in~\cite[Section~2.3]{Ost}.

\begin{lemma}
\label{lem:growcomp}
Let $f_i\in\K[X]$, $i=1,\ldots,s$, be polynomials of degrees $d_i \geq 2$. We write $f_i=a_{i,d_i}X^{d_i}+a_{i,d_i-1}X^{d_i-1}+\cdots+a_{i,0}\in \K[X]$
and let  $\alpha\in \ov\Q$ be such that 
\begin{equation}
\label{eq:alphacondpoly}
|\alpha|_v>\max_{\substack{i=1,\ldots,s\\j=0,\ldots,d_i-1}} \{1,|a_{i,j}|_v |a_{i,d_i}|_v^{-1}, |a_{i,d_i}|_v^{-1}\}
\end{equation} 
for some non-archimidean absolute value $|\cdot|_v$ of $\K$ (normalised in some way and extended to $\ov\K=\ov\Q$). Then 
$$
|f_{i_1\ldots i_n}(\alpha)|_v> |f_{i_1\ldots i_{n-1}}(\alpha)|_v,\quad i_1,\ldots,i_n\in\{1,\ldots,s\},\ n\ge 1,
$$
where $f_{i_1\ldots i_n}$ is defined by~\eqref{eq:fk}.
\end{lemma}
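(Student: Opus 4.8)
The plan is to prove the claim by induction on $n$, reducing everything to a single inequality: that under the hypothesis~\eqref{eq:alphacondpoly}, any one of the polynomials $f_i$ strictly increases the $v$-adic size of an element that is already large enough in the sense of~\eqref{eq:alphacondpoly}, and that the image remains large enough to feed back into the argument. So the first step is to establish the base case / one-step estimate: for any $\beta$ with $|\beta|_v$ exceeding the right-hand side of~\eqref{eq:alphacondpoly}, and any $i$, one has $|f_i(\beta)|_v > |\beta|_v$ and moreover $|f_i(\beta)|_v$ again exceeds that same bound. The inductive step is then immediate: apply the one-step estimate with $\beta = f_{i_1\ldots i_{n-1}}(\alpha)$, which is legitimate once we know inductively that $|f_{i_1\ldots i_{n-1}}(\alpha)|_v$ is at least as large as $|\alpha|_v$ (hence still satisfies the strict inequality~\eqref{eq:alphacondpoly}), giving $|f_{i_1\ldots i_n}(\alpha)|_v = |f_i(f_{i_1\ldots i_{n-1}}(\alpha))|_v > |f_{i_1\ldots i_{n-1}}(\alpha)|_v$.

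The heart of the matter is therefore the one-step estimate, which is a standard ultrametric computation. Write $f_i(\beta) = a_{i,d_i}\beta^{d_i}\bigl(1 + \sum_{j=0}^{d_i-1} a_{i,j} a_{i,d_i}^{-1} \beta^{j-d_i}\bigr)$. Each term $a_{i,j} a_{i,d_i}^{-1}\beta^{j-d_i}$ has $v$-adic absolute value $|a_{i,j}|_v|a_{i,d_i}|_v^{-1}|\beta|_v^{j-d_i} < |\beta|_v \cdot |\beta|_v^{j-d_i} = |\beta|_v^{j-d_i+1} \le 1$ since $j \le d_i - 1$ and $|\beta|_v > 1$; here I used~\eqref{eq:alphacondpoly} to bound $|a_{i,j}|_v|a_{i,d_i}|_v^{-1} < |\beta|_v$. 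By the ultrametric inequality the bracketed factor has absolute value exactly $1$, so $|f_i(\beta)|_v = |a_{i,d_i}|_v|\beta|_v^{d_i}$. Now $|a_{i,d_i}|_v|\beta|_v^{d_i} > |a_{i,d_i}|_v |\beta|_v^{d_i - 1}\cdot|a_{i,d_i}|_v^{-1} = |\beta|_v^{d_i-1} \ge |\beta|_v$, using $|\beta|_v > |a_{i,d_i}|_v^{-1}$ and $d_i \ge 2$; this gives the strict increase. For the "remains large" part, $|f_i(\beta)|_v = |a_{i,d_i}|_v|\beta|_v^{d_i} \ge |\beta|_v^{d_i - 1}$ by the same step, which is at least $|\beta|_v$, already exceeding the bound in~\eqref{eq:alphacondpoly}, so the hypothesis is preserved.

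I do not anticipate a genuine obstacle here — the result is elementary and the non-archimedean (ultrametric) setting is exactly what makes the leading-term domination clean, avoiding the error-term bookkeeping one would need over an archimedean place. The only point requiring a little care is making sure the bound~\eqref{eq:alphacondpoly} is stated and used in the two distinct ways it is needed: once to kill the lower-order coefficients relative to $|\beta|_v$ (the $|a_{i,j}|_v|a_{i,d_i}|_v^{-1} < |\beta|_v$ part) and once to absorb the leading coefficient's absolute value (the $|a_{i,d_i}|_v^{-1} < |\beta|_v$ part), and to verify that the image $f_i(\beta)$ genuinely satisfies the same strict inequality so that the induction can proceed for all $n \ge 1$ simultaneously over all index strings. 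One should also note $|\beta|_v \ge |\alpha|_v$ propagates through the induction since the size is only increasing, which is what licenses reusing~\eqref{eq:alphacondpoly} at every level.
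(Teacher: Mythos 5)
Your proof is correct and takes essentially the same approach as the paper's: show by the ultrametric inequality that the leading term dominates, giving $|f_i(\beta)|_v = |a_{i,d_i}|_v|\beta|_v^{d_i} > |\beta|_v^{d_i-1} \ge |\beta|_v$, then observe that the hypothesis~\eqref{eq:alphacondpoly} is preserved so the induction runs. The only cosmetic difference is that you factor out the leading term to exhibit a ``$1 + (\text{small})$'' structure, whereas the paper subtracts it and compares absolute values directly.
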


\begin{proof}
The proof follows the same computations as in~\cite[Lemma~2.5]{Ost} by induction on $n\ge 1$. However, as~\cite[Lemma~2.5]{Ost} appears only for monic polynomials, we repeat the computations. For $n=1$ we need to prove that $|f_{i_1}(\alpha)|_v>|\alpha|_v$. 
We note that 
\begin{align*}
 |a_{i_1,d_{i_1}}\alpha^{d_{i_1}}-f_{i_1}(\alpha)|_v & \le\max_{j = 0,\ldots, d_{i_1}-1} |a_{i_1,j} \alpha^{j}|_v  \\
 & = \max_{j = 0,\ldots, d_{i_1}-1} |a_{i_1,j}|_v |\alpha|_v^{j} < |a_{i_1,d_{i_1}} |_v |\alpha|_v^{d_{i_1}},
\end{align*}
where the last inequality follows from~\eqref{eq:alphacondpoly}. Hence, 
\begin{align}
|f_{i_1}(\alpha)|_v &=\max\{ |a_{i_1,d_{i_1}}\alpha^{d_{i_1}}-f_{i_1}(\alpha)|_v, |a_{i_1,d_{i_1}} |_v |\alpha|_v^{d_{i_1}}\} \notag \\
& =|a_{i_1,d_{i_1}}|_v|\alpha|_v^{d_{i_1}}>|\alpha|_v^{d_{i_1}-1}. \label{eq:grownonarch}
\end{align}
The result now follows since $d_{i_1} \ge 2$.

We assume now the statement true for iterates up to $n-1$.
Hence for 
$\beta = f_{i_1\ldots i_{n-1}}(\alpha)$ we have 
\begin{align*}
|\beta|_v  = |f_{i_1\ldots i_{n-1}}(\alpha)|_v > \ldots &> |f_{i_1}(\alpha)|_v > |\alpha|_v\\
& >\max_{\substack{i=1,\ldots,s\\j=0,\ldots,d_i-1}} \{1,|a_{i,j}|_v |a_{i,d_i}|_v^{-1}, |a_{i,d_i}|_v^{-1}\}.
\end{align*}
Applying the same argument as above with $\beta$ instead of $\alpha$ and with $f_{i_n}$ instead of $f_{i_1}$, we obtain the result.
\end{proof}

In the proof of Theorem~\ref{thm:KcOrb} we  
apply Lemma~\ref{lem:loxton} to values of polynomials from $\cF_k$ defined by~\eqref{eq:tree} at elements of $\K^c$. 
For this reason we need to control the growth of the house of such compositions, which is presented below.
However it is convenient to start with estimating the absolute value of such elements, which is an Archimedean version of Lemma~\ref{lem:growcomp}.
\begin{lemma}
\label{lem:growabsval}
Under the notation of Lemma~\ref{lem:growcomp}, let
$\alpha\in \C$ be such that 
$$
|\alpha|>1+\max_{i=1,\ldots,s}|a_{i,d_i}|^{-1}\l(1+\sum_{j=0}^{d_i-1}|a_{i,j}|\r).
$$
Then 
$$
|f_{i_1\ldots i_n}(\alpha)|> |f_{i_1\ldots i_{n-1}}(\alpha)|,\quad i_1,\ldots,i_n\in\{1,\ldots,s\},\ n\ge 1.
$$
\end{lemma}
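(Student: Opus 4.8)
The plan is to prove Lemma~\ref{lem:growabsval} as the Archimedean analogue of Lemma~\ref{lem:growcomp}, following essentially the same induction on $n$, but now using the ordinary triangle inequality in place of the ultrametric inequality, which is why the hypothesis on $|\alpha|$ must be additive (a sum of the $|a_{i,j}|$) rather than a maximum. First I would establish the base case $n=1$: for any index $i_1$, write $f_{i_1}(\alpha)=a_{i_1,d_{i_1}}\alpha^{d_{i_1}}+r(\alpha)$ where $r$ collects the lower-order terms, so that
$$
|f_{i_1}(\alpha)| \ge |a_{i_1,d_{i_1}}||\alpha|^{d_{i_1}} - \sum_{j=0}^{d_{i_1}-1}|a_{i_1,j}||\alpha|^{j} \ge |a_{i_1,d_{i_1}}||\alpha|^{d_{i_1}} - |\alpha|^{d_{i_1}-1}\sum_{j=0}^{d_{i_1}-1}|a_{i_1,j}|,
$$
using $|\alpha|>1$ to bound $|\alpha|^j \le |\alpha|^{d_{i_1}-1}$. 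Factoring out $|\alpha|^{d_{i_1}-1}$ gives $|f_{i_1}(\alpha)| \ge |\alpha|^{d_{i_1}-1}\bigl(|a_{i_1,d_{i_1}}||\alpha| - \sum_j |a_{i_1,j}|\bigr)$, and the hypothesis on $|\alpha|$ is exactly engineered so that $|a_{i_1,d_{i_1}}||\alpha| - \sum_j|a_{i_1,j}| > |a_{i_1,d_{i_1}}| \ge \ldots$; more carefully, the hypothesis gives $|\alpha| > 1 + |a_{i_1,d_{i_1}}|^{-1}(1+\sum_j|a_{i_1,j}|)$, hence $|a_{i_1,d_{i_1}}||\alpha| > |a_{i_1,d_{i_1}}| + 1 + \sum_j|a_{i_1,j}|$, so $|a_{i_1,d_{i_1}}||\alpha| - \sum_j|a_{i_1,j}| > |a_{i_1,d_{i_1}}| + 1 > 1$. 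Therefore $|f_{i_1}(\alpha)| > |\alpha|^{d_{i_1}-1} \ge |\alpha|$ since $d_{i_1}\ge 2$. In fact I would record the sharper consequence $|f_{i_1}(\alpha)| > |\alpha|^{d_{i_1}-1}|a_{i_1,d_{i_1}}| \cdot(\text{something}>1)$ so that the inductive step can be fed the same hypothesis.

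For the inductive step, suppose the conclusion holds for all iterates of length up to $n-1$, and set $\beta = f_{i_1\ldots i_{n-1}}(\alpha)$. By the inductive chain $|\beta| > |f_{i_1\ldots i_{n-2}}(\alpha)| > \cdots > |f_{i_1}(\alpha)| > |\alpha|$, so in particular $\beta$ satisfies the same lower bound hypothesis that $\alpha$ does. Applying the base-case argument to $\beta$ in place of $\alpha$ and to the polynomial $f_{i_n}$ then yields $|f_{i_n}(\beta)| > |\beta|$, i.e. $|f_{i_1\ldots i_n}(\alpha)| > |f_{i_1\ldots i_{n-1}}(\alpha)|$, which is the claim. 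The one point requiring a little care is that the base-case argument must be stated uniformly over all indices $i$ (not just a fixed $i_1$), so that it can be reused with $f_{i_n}$; this is why the hypothesis on $|\alpha|$ already takes a maximum over $i=1,\ldots,s$.

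I do not expect a genuine obstacle here: the estimate is elementary and the only mild subtlety is bookkeeping the constant so that the threshold for $|\alpha|$ is preserved under iteration — which the statement handles by making the threshold depend only on the coefficients of $f_1,\ldots,f_s$ and not on the step, together with the fact that the sequence $|f_{i_1\ldots i_k}(\alpha)|$ is increasing and starts above that threshold. If anything is ``hard'' it is purely cosmetic: choosing how to present the chain of inequalities cleanly (e.g. isolating a one-line sublemma ``if $|\gamma|$ exceeds the threshold then $|f_i(\gamma)| > |\gamma|$ for every $i$'' and then invoking it $n$ times). The paper in fact does exactly this, mirroring the structure of the proof of Lemma~\ref{lem:growcomp}.
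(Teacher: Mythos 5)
Your argument is correct and matches the paper's proof essentially line for line: triangle inequality to peel off the leading term, the bound $|\alpha|^j\le|\alpha|^{d_{i_1}-1}$ from $|\alpha|>1$, factoring out $|\alpha|^{d_{i_1}-1}$, and then the hypothesis on $|\alpha|$ forcing the parenthetical factor to exceed $1$, with the induction running because the increasing chain $|f_{i_1\ldots i_k}(\alpha)|>|\alpha|$ keeps every intermediate value above the (step-independent) threshold. The aside about recording a ``sharper consequence'' involving an extra factor of $|a_{i_1,d_{i_1}}|$ is unnecessary — the plain conclusion $|f_{i_1}(\alpha)|>|\alpha|$ already suffices to propagate the hypothesis — but it does no harm.
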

\begin{proof} We again use induction over $n$. For $n=1$, by the triangle inequality, we have $|f_{i_1}(\alpha)| \geq |a_{i_1, d_{i_1}} \alpha^{d_{i_1}}| - |f_{i_1}(\alpha) - a_{i_1,d_{i_1}} \alpha^{d_{i_1}} |.$ Now,
\begin{align*}
|f_{i_1}(\alpha) - a_{i_1,d_{i_1}} \alpha^{d_{i_1}} | & = |a_{i_1, d_{i_1}-1} \alpha^{d_{i_1}-1} + \ldots + a_{i_1,0} | \\
 & \leq |\alpha|^{d_{i_1}-1} \sum_{j=0}^{d_{i_1}-1} |a_{i_1,j} |,
\end{align*}
where the last inequality follows since $|\alpha| > 1$. We conclude that
\begin{align} \label{eq:growabs}
|f_{i_1}(\alpha)| & \ge |\alpha|^{d_{i_1}-1} \l( |a_{i_1,d_{i_1}}| |\alpha| - \sum_{j=0}^{d_{i_1}-1} |a_{i_1,j} | \r) \\
& > |\alpha|^{d_{i_1}-1}, \notag
\end{align}
which gives the conclusion for $n=1$, since $d_{i_1} \geq 2$. The implication from $n-1$ to $n$ follows the exact same lines.
\end{proof}

\begin{corollary}
\label{cor:growhouse}
Under the notation of Lemma~\ref{lem:growcomp}, let $A\in\R$ be positive and define
$$
L=\max_{\sigma}\left\{ 1 +\max_{i=1,\ldots,s}|\sigma(a_{i,d_i}^{-1})|\l(1+\sum_{j=0}^{d_i-1}|\sigma(a_{i,j})|\r),A\right\},
$$
where the maximum runs over all embeddings $\sigma$ of $\K$ in $\C$. Let $\alpha\in \ov\Q$ be such that, for some $k\ge 1$ and some $i_1,\ldots,i_k\in\{1,\ldots,s\}$, we have 
 $\house{f_{i_1\ldots i_k}(\alpha)}\le A$.
Then $\house{f_{i_1\ldots i_\ell}(\alpha)}\le L$ for all $\ell<k$. 
\end{corollary}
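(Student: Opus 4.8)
The plan is to argue by contradiction on the level: suppose $\house{f_{i_1\ldots i_\ell}(\alpha)}>L$ for some $\ell<k$, and derive a contradiction with $\house{f_{i_1\ldots i_k}(\alpha)}\le A\le L$. The point is that once the house of an iterate exceeds $L$, the house of \emph{every} subsequent iterate along the same path is strictly larger, so in particular $\house{f_{i_1\ldots i_k}(\alpha)}>L\ge A$, contradiction.

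To make this precise, first I would fix an embedding $\sigma_0$ of $\K(\alpha)$ (equivalently, of the relevant number field) into $\C$ realizing the house, i.e. with $|\sigma_0(f_{i_1\ldots i_k}(\alpha))|=\house{f_{i_1\ldots i_k}(\alpha)}$ --- actually the cleaner route is to work with an arbitrary embedding $\sigma$ and the conjugate $\beta:=\sigma(\alpha)$, noting that $\sigma(f_{i_1\ldots i_m}(\alpha))=f_{i_1\ldots i_m}^\sigma(\beta)$, where $f_i^\sigma$ denotes $f_i$ with its coefficients replaced by their $\sigma$-images. The key claim is: if $|\sigma(f_{i_1\ldots i_\ell}(\alpha))|>L$ then $|\sigma(f_{i_1\ldots i_m}(\alpha))|$ is strictly increasing for $m\ge\ell$. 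This follows from the Archimedean growth estimate, Lemma~\ref{lem:growabsval} (or rather its one-step content, inequality~\eqref{eq:growabs}): the threshold in Lemma~\ref{lem:growabsval} for the system $\{f_i^\sigma\}$ is exactly $1+\max_i|\sigma(a_{i,d_i})|^{-1}\bigl(1+\sum_{j=0}^{d_i-1}|\sigma(a_{i,j})|\bigr)\le L$, so having an iterate of absolute value $>L$ puts us in the regime where Lemma~\ref{lem:growabsval} applies with that iterate as the new starting point, giving strict growth from there on.

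Then I would assemble the contradiction: choose the embedding $\sigma$ with $|\sigma(f_{i_1\ldots i_\ell}(\alpha))|=\house{f_{i_1\ldots i_\ell}(\alpha)}>L$ for the offending $\ell$. Wait --- one must be slightly careful, since the embedding maximizing the house at level $\ell$ need not be the one maximizing it at level $k$; but that is fine for our direction, since for \emph{that} particular $\sigma$ we get
$$
\house{f_{i_1\ldots i_k}(\alpha)}\ \ge\ |\sigma(f_{i_1\ldots i_k}(\alpha))|\ >\ |\sigma(f_{i_1\ldots i_\ell}(\alpha))|\ >\ L\ \ge\ A,
$$
using the strict monotonicity of $m\mapsto|\sigma(f_{i_1\ldots i_m}(\alpha))|$ for $m\ge\ell$ from the claim, together with $\ell<k$. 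This contradicts the hypothesis $\house{f_{i_1\ldots i_k}(\alpha)}\le A$, and hence $\house{f_{i_1\ldots i_\ell}(\alpha)}\le L$ for all $\ell<k$, as required.

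The only mildly delicate point --- the "main obstacle", such as it is --- is the bookkeeping that the growth lemma is being applied over the \emph{twisted} system $\{f_i^\sigma\}$ and that the $L$ in the statement already maximizes the relevant threshold over all embeddings $\sigma$; once that is observed the argument is immediate from Lemma~\ref{lem:growabsval}. I would also remark that the proof uses nothing about $\alpha\in\K^c$ --- it holds for any $\alpha\in\ov\Q$ --- and that it is the Archimedean analogue of the fact that, under the non-Archimedean hypothesis~\eqref{eq:alphacondpoly}, the valuations of iterates are monotone along paths (Lemma~\ref{lem:growcomp}).
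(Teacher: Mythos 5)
Your proof is correct and follows essentially the same route as the paper: argue by contradiction, pick an embedding $\sigma$ witnessing $\house{f_{i_1\ldots i_\ell}(\alpha)}>L$, pass to the conjugated system $\{\sigma(f_i)\}$ noting that $L$ dominates the Archimedean threshold of Lemma~\ref{lem:growabsval} uniformly over all embeddings, and apply that lemma recursively from level $\ell$ to level $k$ to force $|\sigma(f_{i_1\ldots i_k}(\alpha))|>L\ge A$, contradicting $\house{f_{i_1\ldots i_k}(\alpha)}\le A$. The bookkeeping you flag as the "delicate point" (that the threshold is for the $\sigma$-twisted polynomials and is absorbed into the $\max_\sigma$ defining $L$) is exactly what the paper handles, just more tersely.
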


\begin{proof}
This follows
immediately from Lemma~\ref{lem:growabsval}, see also~\cite[Corollary~2.8]{Ost}. Indeed, assume that $\house{f_{i_1\ldots i_\ell}(\alpha)}> L$ for some $\ell<k$. This means that there exists a conjugate of $f_{i_1\ldots i_\ell}(\alpha)$, which we denote by
$\sigma\l(f_{i_1\ldots i_\ell}(\alpha)\r)$, such that $|\sigma\l(f_{i_1\ldots i_\ell}(\alpha)\r)|> L$. We note that $\sigma\l(f_{i_1\ldots i_\ell}(\alpha)\r)=\sigma(f_{i_1\ldots i_\ell})(\sigma(\alpha))$, where $\sigma(f_{i_1\ldots i_\ell})=\sigma(f_{i_1}) \circ \ldots \circ\sigma(f_{i_{\ell}})$ is the composition of polynomials $f_{i_j}$, $j\le \ell$, in which we replace the coefficients $a_{i,l}$ of $f_i$ by $\sigma(a_{i,l})$. We apply now recursively Lemma~\ref{lem:growabsval} with the polynomials $\sigma(f_{i_j})$, $j=\ell+1,\ldots,k$, and the point $\sigma\l(f_{i_1\ldots i_\ell}(\alpha)\r)$ to conclude that $|\sigma\l(f_{i_1\ldots i_k}(\alpha)\r)|>A$, which is a contradiction with our assumption.
\end{proof}

We also require the following, which will allow us to apply Lemma~\ref{lem:chen} in the proof of Theorem~\ref{thm:KcOrb}.

\begin{lemma}
\label{lem:algint}
Let $f_1,\ldots,f_s \in \K[X]$. Then there exists a positive integer $D$, depending only on $f_1,\ldots,f_s$ such that for any $i_1,\ldots,i_n \in \{1,\ldots,s\}$ and $\alpha \in \K^c$, if $f_{i_1 \ldots i_n}(\alpha)$ is an algebraic integer, then $D \alpha$ and $D f_{i_1 \ldots i_r}(\alpha)$, $r=1,\ldots,n-1$, are algebraic integers.
\end{lemma}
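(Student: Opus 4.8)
The plan is to produce a single denominator $D$ that clears all coefficients of all the $f_i$ and simultaneously handles leading coefficients, and then to argue ``from the top of the composition tower downward'' using integrality of symmetric functions of conjugates. First I would fix a finite extension: since $f_1,\ldots,f_s\in\K^c[X]$, all their coefficients lie in some fixed number field $\K'=\K(\zeta)$ for a suitable root of unity $\zeta$, and it suffices to work in $\K'$ (or rather in $\ov\Q$, tracking only the ring of integers). Write $f_i=a_{i,d_i}X^{d_i}+\cdots+a_{i,0}$. Choose a positive integer $D_0$ such that $D_0 a_{i,j}$ and $D_0 a_{i,d_i}^{-1}$ are algebraic integers for all $i$ and all $j$; this is possible because there are finitely many nonzero algebraic numbers involved, each of which has a positive integer multiple that is an algebraic integer. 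Then set $D=D_0^{N}$ for a sufficiently large exponent $N$ depending only on $\max_i d_i$ and on the maximal length $n$ of compositions one needs — but in fact, as I explain below, it is cleaner to prove the statement with $D$ replaced at each level by a fixed power of $D_0$, so that a single $D_0^{\,C}$ with $C$ depending only on $f_1,\ldots,f_s$ works for all $n$.

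The core inductive step is the following: if $g\in\K'[X]$ has the form $g=a_m X^m+\cdots+a_0$ with $D_0 a_j$, $D_0 a_m^{-1}$ algebraic integers, and if $\beta\in\ov\Q$ satisfies that $g(\beta)$ is an algebraic integer, then $D_0^{\,2}\beta$ is an algebraic integer. Indeed, $a_m\beta^m=g(\beta)-(a_{m-1}\beta^{m-1}+\cdots+a_0)$, so multiplying by $D_0$ and by powers of $D_0$ to clear the $a_j$, one finds that $D_0^{\,m+1}\beta$ is a root of a monic polynomial with algebraic-integer coefficients (the standard ``multiply a root of $a_m X^m+\cdots$ by $a_m$ to make it integral over the integers'' trick, adapted to having $D_0 a_m^{-1}$ integral rather than $a_m$ itself). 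Thus $D_0^{\,m+1}\beta\in\ov\Z$; since $m\le d:=\max_i d_i$, the multiplier $D_0^{\,d+1}$ works uniformly. Applying this with $g=f_{i_n}$ and $\beta=f_{i_1\ldots i_{n-1}}(\alpha)$ shows $D_0^{\,d+1} f_{i_1\ldots i_{n-1}}(\alpha)$ is an algebraic integer. The subtlety is that at the next step down I no longer have ``$f_{i_{n-1}}$ evaluated at a point gives an algebraic integer,'' but rather ``$D_0^{\,d+1}$ times that value is an algebraic integer,'' so I need the slightly more general statement: if $D'g(\beta)\in\ov\Z$ for some positive integer $D'$, then $(D'D_0^{\,d+1})^{\text{something}}\beta\in\ov\Z$. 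Absorbing these multipliers, one sees the power of $D_0$ needed grows linearly in the number of remaining levels, so with $n$ levels one needs $D=D_0^{\,(d+1)n}$ — which depends on $n$.

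To get a bound independent of $n$ (as the statement requires $D$ to depend only on $f_1,\ldots,f_s$), I would instead invoke Corollary~\ref{cor:growhouse} together with a Northcott-type observation, OR — more simply and self-containedly — note that the set of $\alpha\in\K^c$ for which some $f_{i_1\ldots i_n}(\alpha)$ is an algebraic integer is irrelevant to unboundedness: the right move is to localize at each finite place $v$. For each non-archimedean $v$ of $\ov\Q$, if $f_{i_1\ldots i_n}(\alpha)$ is a $v$-adic integer, I claim $\alpha$ and each $f_{i_1\ldots i_r}(\alpha)$ lie in $D^{-1}\cO_v$ where $D$ clears the coefficients of the $f_i$ at $v$: this is because if $|\alpha|_v$ (or $|f_{i_1\ldots i_r}(\alpha)|_v$) were too large, Lemma~\ref{lem:growcomp} would force $|f_{i_1\ldots i_n}(\alpha)|_v$ to be even larger, contradicting $v$-integrality; and ``too large'' here means exceeding $\max\{1,|a_{i,j}|_v|a_{i,d_i}|_v^{-1},|a_{i,d_i}|_v^{-1}\}$, which is $1$ for all but finitely many $v$ and is bounded by $|D|_v^{-1}$ for the finitely many remaining $v$. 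Running this over all $v$ simultaneously with the same $D=D_0^{\,C}$ (where $C$ handles the finitely many bad places and is independent of $n$ because Lemma~\ref{lem:growcomp} gives a strict monotonic chain at each bad place, so the bound at level $r$ never exceeds the bound at level $0$) yields that $D\alpha$ and all $D f_{i_1\ldots i_r}(\alpha)$ are algebraic integers. I expect the main obstacle to be exactly this uniformity in $n$: one must make sure the denominator does not accumulate down the composition tower, and the clean way to see it does not is the place-by-place argument via Lemma~\ref{lem:growcomp}, where the key point is that the monotonicity threshold in~\eqref{eq:alphacondpoly} is the \emph{same} at every level, so a point that is $v$-integral at the top cannot have been ``less $v$-integral than $|D|_v^{-1}$'' at any earlier level.
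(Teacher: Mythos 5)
Your second approach is exactly the paper's proof: for each non-archimedean $v$, Lemma~\ref{lem:growcomp} forces $|\alpha|_v$ and every $|f_{i_1\ldots i_r}(\alpha)|_v$ to stay at or below the $n$-independent threshold of~\eqref{eq:alphacondpoly} (else $|f_{i_1\ldots i_n}(\alpha)|_v>1$, contradicting integrality), so a single $D$ clearing that threshold at the finitely many bad places works uniformly in $n$. The top-down induction in the first half of your write-up is a detour you correctly set aside in favor of this place-by-place argument.
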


\begin{proof}
We use the notation from Lemma \ref{lem:growcomp} for the coefficients of $f_1,\ldots,f_s$. Let $\alpha \in \K^c$ and $i_1,\ldots,i_n \in \{1,\ldots,s\}$ be such that $\beta := f_{i_1 \ldots i_n}(\alpha)$ is an algebraic integer. For any non-archimedean place $v$ of $\K$, we must have
$$
|\alpha|_v, |f_{i_1\ldots i_r}(\alpha)|_v\le\max_{\substack{i=1,\ldots,s\\j=0,\ldots,d_i-1}} \{1,|a_{i,j}|_v |a_{i,d_i}|_v^{-1}, |a_{i,d_i}|_v^{-1}\}
$$ 
for all $r=1,\ldots,n-1$. Otherwise, by Lemma~\ref{lem:growcomp}, we get that $|\beta|_v > 1$, which contradicts the fact that $\beta$ is an algebraic integer.

Hence, taking $D$ to be a positive integer such that $Da_{i,d_i}^{-1}$ and $Da_{i,j}a_{i,d_i}^{-1}$, 
$i=1,\ldots,s$ and $j=0,\ldots,d_i-1$, are all algebraic integers, we conclude that $D \alpha$ and $D f_{i_1 \ldots i_r}(\alpha)$, $r=1,\ldots,n-1$, are also algebraic integers.
\end{proof}

\subsection{Growth of the number of terms in rational function iterates} The main result of the paper also relies on the following result of Fuchs and Zannier~\cite[Theorem~2]{FZ} which gives a lower bound for the number of terms in a decomposable Laurent polynomial.

\begin{lemma}
\label{lem:FZ}
Let $\ell$ be a positive integer and let $h\in\K[X,X^{-1}]$ be a Laurent polynomial with $\ell$ non-constant terms. Suppose that $h(X)=g(q(X))$, where $g\in\K[X]$, $q\in\K[X,X^{-1}]$ and where $q$ is not of the form $aX^n+bX^{-n}+c$ for $a,b,c\in\K$, $n\in\N$. Then
$$
\deg g\le 2(2\ell-1)(\ell-1).
$$ 
\end{lemma}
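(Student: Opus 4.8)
\textbf{Proof plan for Lemma~\ref{lem:FZ}.}

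The plan is to derive the bound from the cited result of Fuchs and Zannier~\cite[Theorem~2]{FZ}, whose statement concerns the number of non-constant terms in a decomposable Laurent polynomial, and to read off the stated inequality on $\deg g$ by a direct substitution of parameters. First I would recall precisely the Fuchs--Zannier theorem: given a Laurent polynomial $h = g \circ q$ with $g \in \K[X]$ of degree $m$ and $q \in \K[X,X^{-1}]$ not of the exceptional form $aX^n+bX^{-n}+c$, one obtains a lower bound for the number of non-constant terms of $h$ of the shape $\ell \ge \psi(m)$ for an explicit increasing function $\psi$. Rearranging $\ell \ge \psi(m)$ into $m \le \psi^{-1}(\ell)$ then yields exactly $\deg g \le 2(2\ell-1)(\ell-1)$, provided $\psi$ is the quadratic whose inverse gives that expression; the bookkeeping is simply matching their normalisation (counting non-constant terms, and the role of the exceptional family) to ours.

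The key steps, in order, would be: (1) state the Fuchs--Zannier bound in the form ``$h$ has at least $c_1\sqrt{\deg g} + c_2$ non-constant terms'' with the explicit constants from~\cite{FZ}; (2) verify the hypothesis transfers — $q$ is not of the form $aX^n + bX^{-n}+c$ is exactly the non-degeneracy condition excluded there, and $g$ being an ordinary (not Laurent) polynomial is what lets us speak of $\deg g$ unambiguously; (3) solve the resulting inequality for $\deg g$, obtaining the quadratic bound $2(2\ell-1)(\ell-1)$; (4) note any edge cases, e.g. small $\ell$ (if $\ell = 1$ the bound forces $\deg g$ bounded, consistent with $h$ having one non-constant term). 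Since this lemma is essentially a restatement/repackaging of an external theorem, there is little to prove beyond careful citation.

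The main obstacle — such as it is — is bookkeeping rather than mathematics: making sure the quantity ``$\ell$ non-constant terms'' in our statement matches the quantity bounded in~\cite[Theorem~2]{FZ} (some sources count all terms, or count terms of $h$ versus terms after clearing denominators), and confirming that the exceptional family in their theorem is literally $\{aX^n+bX^{-n}+c\}$ and not a slightly larger or smaller set (e.g. whether Chebyshev-type $q$ must be excluded separately, or whether it already falls inside that family after a change of variable). If the constants in~\cite{FZ} are stated in a form not immediately giving $2(2\ell-1)(\ell-1)$, one would need to track through their argument to confirm the precise polynomial, but assuming the citation is faithful this is immediate.
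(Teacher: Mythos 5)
Your proposal takes the same route as the paper, which offers no proof of Lemma~\ref{lem:FZ} at all: it is stated as a direct consequence of Fuchs and Zannier~\cite[Theorem~2]{FZ}, and the paper simply cites that theorem. The bookkeeping concerns you flag — matching the count of non-constant terms and confirming the exceptional family is exactly $\{aX^n+bX^{-n}+c\}$ — are the right ones, and the cited result already appears in~\cite{FZ} in essentially the stated form, so the inversion step you anticipate in (1) and (3) is not needed; the bound $\deg g\le 2(2\ell-1)(\ell-1)$ can be read off verbatim.
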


\section{Proofs of main results}

\subsection{Proof of Theorem~\ref{thm:KcOrb}}

Since there are finitely many $\gamma \in \cH_A$ in the images of $f_1,\ldots,f_s$, it suffices to fix $\gamma \in \cH_A$, and prove that there are finitely many $\alpha \in \K^c$ such that $\gamma \in \cF_k(\alpha)$ for some $k \geq 1$, where $\cF_k(\alpha)$ is defined by~\eqref{eq:tree alpha}. Assume for contradiction that there are infinitely many such $\alpha \in \K^c$. Let $M$ be a positive integer satisfying
\begin{equation}
\label{eq:m}
M > \frac{2 \log ( 2 L_\K(DL))}{\log 2} + 3.
\end{equation}

If $k\le M$, then obviously there are finitely many $\alpha\in \K^c$ such that $f_{i_1\ldots i_k}(\alpha)=\gamma$ for any $i_1,\ldots,i_k\in\{1,\ldots,s\}$. So there must exist infinitely many $\alpha\in\K^c$ such that $\gamma \in \cF_{\ell}(f_{i_1\ldots i_M}(\alpha))$ for some $\ell\ge 1$ and some fixed $i_1,\ldots,i_M\in\{1,\ldots,s\}$.

Thus, by Lemma \ref{lem:algint}, there exists a positive integer $D$ depending only on $f_1,\ldots,f_s$ such that $D \cdot f_{i_1 \ldots i_M}(\alpha)$ is an algebraic integer for infinitely many $\alpha \in \K^c$. Moreover, where $L$ is defined as in Corollary \ref{cor:growhouse}, the house of $D \cdot f_{i_1 \ldots i_M}(\alpha)$ for such $\alpha$ is at most $DL$. Therefore, 
$$\# \{ \alpha \in \K^c \mid D \cdot f_{i_1 \ldots i_M}(\alpha) \in \cH_{DL} \} = \infty,$$ 
and so, by Lemma~\ref{lem:chen}, there exists a rational function 
$S \in \K^c(X)$, integers $n_i$, roots of unity $\beta_i \in \U$, and $e_i \in E$ (where $E$ is as in Lemma~\ref{lem:loxton}) such that
$$
D \cdot f_{i_1 \ldots i_M}(S(X)) = \sum_{i=1}^b \beta_i e_i X^{n_i},
$$
where $b \le L_\K(DL)$ and $L_\K$ is defined by~\eqref{eq:LK}.

We now want to apply Lemma~\ref{lem:FZ}, either with $g=D \cdot f_{i_1 \ldots i_M}$ and $q = S$, $g = D \cdot f_{i_2 \ldots i_M}$ and $q = f_{i_1} \circ S$, $g = D \cdot f_{i_3 \ldots i_M}$ and $q = f_{i_1 i_2} \circ S$ or $g = D \cdot f_{i_4 \ldots i_M}$ and $q = f_{i_1 i_2 i_3 } \circ S$ to conclude that we must have
$$
2^{M-3} \le \deg g \le 2(2b-1)(b-1) \le 4L_\K(DL)^2,
$$
which contradicts our choice of $M$.
It is enough to show that one of $S$, $f_{i_1} \circ S$, $f_{i_1i_2} \circ S$ and $f_{i_1i_2i_3} \circ S$ is not of the forbidden form. Assume all have this form, that is 
\begin{align} \label{eq:trinom}
S(X) & = a_1X^{m_1}+b_1X^{-m_1}+c_1, \notag \\
f_{i_1}(S(X)) & = a_2X^{m_2}+b_2X^{-m_2}+c_2\\
f_{i_2}(f_{i_1}(S(X)))&=a_3X^{m_3}+b_3X^{-m_3}+c_3 \notag\\
f_{i_3}(f_{i_1i_2}(S(X)))&=a_4X^{m_4}+b_4X^{-m_4}+c_4 \notag.
\end{align}
Note that we must have $m_2 = d_1 m_1$, $m_3=d_2 m_2$ and $m_4=d_3m_3$, where $d_k=\deg(f_{i_k})$, $k=1,2,3$. 

{\it Case I: $a_1 b_1 \neq 0$.} Let $Y = u X^{m_1}$, where $u^2 = a_1 b_1^{-1}$. Then $S(X) = L_1(Y+Y^{-1})$, where $L_1(X) = a_1 u^{-1} X + c_1$. Hence, by \eqref{eq:trinom}, 
$$
f_{i_1}(L_1(Y+Y^{-1})) = a_2 u^{-d_1} Y^{d_1} + b_2 u^{d_1} Y^{-d_1} + c_2.
$$
Since the left-hand side of this equation is invariant under $Y \mapsto Y^{-1}$, we obtain $a_2 u^{-d_1} = b_2 u^{d_1}$, and so 
\begin{equation}
\label{eq:fi1}
f_{i_1}(L_1(Z)) =a_2u^{-d_1}\(Y^{d_1}+Y^{-d_1}\)+c_2= L_2(T_{d_1}(Z)),
\end{equation}
where $Z=Y+Y^{-1}$ and $L_2(X) = a_2 u^{-d_1} X + c_2$.  

From~\eqref{eq:trinom} and~\eqref{eq:fi1}, we have
$$
f_{i_2}(L_2(T_{d_1}(Z)))=a_3u^{-d_1d_2}Y^{d_1d_2}+b_3u^{d_1d_2}Y^{-d_1d_2}+c_3.
$$
As above, since the left-hand side  is invariant under $Y \mapsto Y^{-1}$, we obtain $a_3u^{-d_1d_2}=b_3u^{d_1d_2}$, and thus
$$
f_{i_2}(L_2(T_{d_1}(Z)))=a_3u^{-d_1d_2}\(Y^{d_1d_2}+Y^{-d_1d_2}\)+c_3=L_3(T_{d_1d_2}(Z)),
$$
where $L_3(X)=a_3u^{-d_1d_2}X+c_3$. 
Now, since $T_{d_1d_2}(Z)=T_{d_2}(T_{d_1}(Z))$, we have
\begin{equation}
\label{eq:fi2}
f_{i_2}(L_2(T))=L_3(T_{d_2}(T)), \quad T=T_{d_1}(Z).
\end{equation}

Using the last relation in~\eqref{eq:trinom}, similar computation as above shows that there exists another linear polynomial $L_4$ such that
\begin{equation}
\label{eq:fi3}
f_{i_3}(L_3(U))=L_4(T_{d_3}(U)), \quad U=T_{d_2}(T).
\end{equation}

If $i_1=i_2=i_3$, then from~\eqref{eq:fi1},~\eqref{eq:fi2} and ~\eqref{eq:fi3} we conclude that
$$
f_{i_1}^{(3)}=L_4\circ T_{d_{i_1}^3}\circ L_1^{-1}.
$$
Applying now Lemma~\ref{lem:mon Cheb}, we obtain a contradiction with the first condition of Definition~\ref{def:Srf Sys}. If at least two of $i_1,i_2$ and $i_3$ are distinct, we obtain a contradiction with the second condition of Definition~\ref{def:Srf Sys}.  

Also, we note that in the above we could have considered only $f_{i_1}$ and $f_{i_2}$ if $d_1,d_2>2$ since Lemma~\ref{lem:mon Cheb} (ii) applies in this case.

{\it Case II: $a_1b_1 =0$.} We may assume that $b_1 = 0$, and so $S(X) = L_4(Z)$, where $Y=X^{m_1}$ 
and $L_5(X) = a_1X+c_1$. Since Lemma~\ref{lem:mon Cheb} (i) applies for degree two as well, it is enough to consider only the first three relations in~\eqref{eq:trinom}.

Then, from \eqref{eq:trinom}, 
$$
f_{i_1}(L_5(Y)) = a_2Y^{d_1}+b_2Y^{-d_1}+c_2.
$$
Since the left-hand side is a polynomial in $Y$, we have $b_2=0$, and thus $f_{i_1}(L_5(Y)) = L_6(Y^{e_1})$, where $L_6(X) = a_2 X + c_2$. Similarly, we obtain $f_{i_2}(L_6(U))=L_7(U^{d_2})$, where $U=Y^{d_1}$ and $L_7(X)=a_3X+c_3$. As in the previous case, we  obtain again a  contradiction with  the fact that the set $\cF$ is non-special.

\subsection{Proof of Corollary~\ref{cor:r o u}} As remarked in Section~\ref{sec:def}, $\cH_1=\U$. To  apply  Theorem~\ref{thm:KcOrb} and conclude the proof, we need to see that $f_i(\K^c)\cap \U$ is finite for all $i=1,\ldots,s$. Since, by our hypothesis, the polynomials $f_i(X)-Y^{m_i}$ as polynomials in $X$ do not have a root in $\K^c(Y)$ for all $m_i\le d_i$, this follows directly from the proof of the cyclotomic version of the Hilbert's Irreducibility Theorem due to Dvornicich and Zannier~\cite[Corollary 1]{DZ}.

\subsection{Proof of Theorem~\ref{thm:KcPreper}}

Suppose $\Pi(\cF)$ is infinite, and let $M$ be a positive integer defined as in \eqref{eq:m} depending only on the polynomials $f_1,\ldots,f_s$ and $\K$. Since the number of solutions to $f_{i_1 \ldots i_n}(\alpha) = f_{i_1 \ldots i_k}(\alpha)$ with $M \ge n > k$ and $i_1, \ldots, i_n \in \{1, \ldots, s\}$ is finite, we must have infinitely many $\alpha \in \K^c$ satisfying the above with $n>M$.  Hence, there exist some fixed indices $i_1, \ldots, i_M \in \{1, \ldots, s \}$ such that there are infinitely many $\alpha \in \K^c$ with $f_{i_1 \ldots i_k}(\alpha) \in \cF_\ell(f_{i_1 \ldots i_M}(\alpha))$ 
for some $k < \ell+M$, $\ell \geq 1$. Consider such an $\alpha$, say $$f_{i_1 \ldots i_k}(\alpha) = f_{i_1 \ldots i_M \ldots i_{M+\ell}}(\alpha)=f_{i_{M+1} \ldots i_{M+\ell}}(f_{i_1 \ldots i_{M}}(\alpha))$$ 
with $i_{M+1},\ldots, i_{M+\ell} \in \{1,\ldots,s\}$.

If $k\le M$, then composing the above identity with $f_{i_{k+1} \ldots i_{M}}$ we obtain
$$f_{i_1 \ldots i_M}(\alpha) =f_{i_{M+1} \ldots i_{M+\ell}i_{k+1} \ldots i_{M}}(f_{i_1 \ldots i_{M}}(\alpha)).$$ 
Thus, $f_{i_1 \ldots i_{M}}(\alpha)\in\Pi(\cF)\cap\K^c$.

Let $L$ be defined as in Corollary \ref{cor:growhouse}, say with $A=1$. Then we must have $\house{f_{i_1 \ldots i_M}(\alpha)} \leq L$. Otherwise, as in the proof of Corollary \ref{cor:growhouse}, there exists a conjugate $\sigma(f_{i_1 \ldots i_M}(\alpha))$ of $f_{i_1 \ldots i_M}(\alpha)$ with $|\sigma(f_{i_1 \ldots i_M}(\alpha))| > L$, and so we deduce from Lemma \ref{lem:growabsval} that 
\begin{align*}
|\sigma(f_{i_1 \ldots i_M}(\alpha))| & = |\sigma\(f_{i_{M+1} \ldots i_{M+\ell}i_{k+1} \ldots i_{M}}(f_{i_1 \ldots i_{M}}(\alpha))\)|  > |\sigma(f_{i_1 \ldots i_M}(\alpha))|,
\end{align*}
and thus a contradiction.

Moreover, similar to Lemma \ref{lem:algint}, we have for any finite place $v$ of $\K$,
$$
|f_{i_1 \ldots i_M}(\alpha)|_v \le\max_{\substack{i=1,\ldots,s\\j=0,\ldots,d_i-1}} \{1,|a_{i,j}|_v |a_{i,d_i}|_v^{-1}, |a_{i,d_i}|_v^{-1}\},
$$
as otherwise $\{ | f_{i_1 \ldots i_r}(\alpha) |_v \}_{r=M}^\infty$ is strictly increasing by Lemma~\ref{lem:growcomp}. 
Hence, if $D$ is a positive integer such that $Da_{i,d_i}^{-1}$ and $Da_{i,j}a_{i,d_i}^{-1}$, $i=1,\ldots,s$ and $j=0,\ldots,d_i-1$, are all algebraic integers, then we obtain that $D\cdot f_{i_1 \ldots i_M}(\alpha)$ is an algebraic integer.

If $M<k<\ell+M$, then since $f_{i_1 \ldots i_{k}}(\alpha)\in\Pi(\cF)\cap\K^c$, as above, we obtain that $\house{f_{i_1 \ldots i_k}(\alpha)} \leq L$ with $L$ defined as in Corollary \ref{cor:growhouse}. Applying now Corollary \ref{cor:growhouse}, and the fact that $M<k$, we obtain that $\house{f_{i_1 \ldots i_M}(\alpha)} \leq L$. 
Moreover, as above,  for any finite place $v$ of $\K$,
$$
|f_{i_1 \ldots i_k}(\alpha)|_v \le\max_{\substack{i=1,\ldots,s\\j=0,\ldots,d_i-1}} \{1,|a_{i,j}|_v |a_{i,d_i}|_v^{-1}, |a_{i,d_i}|_v^{-1}\}.
$$
However, since  $M<k$, we also have that 
$$
|f_{i_1 \ldots i_M}(\alpha)|_v \le\max_{\substack{i=1,\ldots,s\\j=0,\ldots,d_i-1}} \{1,|a_{i,j}|_v |a_{i,d_i}|_v^{-1}, |a_{i,d_i}|_v^{-1}\}
$$
since otherwise, by Lemma \ref{lem:growcomp}, we have
$$
|f_{i_1 \ldots i_k}(\alpha)|_v>|f_{i_1 \ldots i_M}(\alpha)|_v>\max_{\substack{i=1,\ldots,s\\j=0,\ldots,d_i-1}} \{1,|a_{i,j}|_v |a_{i,d_i}|_v^{-1}, |a_{i,d_i}|_v^{-1}\},
$$
and thus a contradiction.
We finally obtain that $D \cdot f_{i_1 \ldots i_M}(\alpha)$ is an algebraic integer for a positive integer $D$ as above.

We conclude that $$\# \{ \alpha \in \K^c \mid D \cdot f_{i_1 \ldots i_M}(\alpha) \in \cH_{DL} \} = \infty,$$ and from this point the proof to obtaining a contradiction with Definition~\ref{def:Srf Sys} is the same as in the proof of Theorem~\ref{thm:KcOrb}.

\subsection{Proof of Theorem~\ref{thm:KcPreper2}}

To prove Theorem~\ref{thm:KcPreper2}, we begin by proving that $\Sigma_A(\cF)$ is a set of bounded house.

Let $\alpha \in \Sigma_A(\cF)$, say 
$$
f_{i_1 \ldots i_n}(\alpha) = \sum_{k=0}^{n-1} \sum_{j_1,\ldots,j_k=1}^s \gamma_{j_1 \ldots j_k} f_{j_1 \ldots j_k}(\alpha),
$$ 
with each $\gamma_{j_1 \ldots j_k} \in \cH_{A^{d^{n-1}}}$. Write $f_i(X)=a_{i,d}X^d+\ldots+a_{i,0}$ for $i=1,\ldots,s$, and choose $m \geq 1$ such that $|\sigma(a_{i,d})| > \frac{1}{m}$ for all embeddings $\sigma$ of $\K$ in $\C$, and $i=1,\ldots,s$. We define
\begin{equation*}
\begin{split}
K &= \max_{\sigma} \left\{ 2s m^2 A +\max_{i=1, \ldots, s} \left(|\sigma(a_{i,d})|+ \l( 1 + \l( |\sigma(a_{i,d})|-\frac{1}{m} \r)^{-1} \r)\right.\right.\\
&\qquad\qquad\qquad\qquad\qquad\qquad\qquad\qquad\qquad\qquad\quad\cdot \left.\left. \sum_{j=0}^{d-1}|\sigma(a_{i,j})|  \right)\right\},
\end{split}
\end{equation*}
where the maximum runs over all embeddings of $\K$ in $\C$. Fix such an embedding $\sigma$, and suppose that $|\sigma(\alpha)| > K$. Then
$$
|\sigma(a_{i_1,d})||\sigma(\alpha)| - \sum_{j=0}^{d-1} |\sigma(a_{i_1,j})| > \frac{|\sigma(\alpha)|}{m},
$$
and so the calculation in \eqref{eq:growabs} gives
$$
|\sigma(f_{i_1})(\sigma(\alpha))| > |\sigma(\alpha)|^{d-1} \cdot \frac{|\sigma(\alpha)|}{m} = \frac{|\sigma(\alpha)|^d}{m} > K.
$$
Iterating this, we obtain
\begin{equation} \label{eq:lowbound}
|\sigma(f_{i_1 \ldots i_n})(\sigma(\alpha))| > \frac{|\sigma(\alpha)|^{d^n}}{m^{1+\ldots+d^{n-1}}} > \frac{|\sigma(\alpha)|^{d^n}}{m^{2d^{n-1}}}.
\end{equation}
On the other hand, since $|\sigma(\alpha)| > 1$,  for any $k < n$ and $j_1,\ldots, j_k \in \{1, \ldots s \}$ we have
\begin{align*}
|\sigma(f_{j_1})(\sigma(\alpha))| & \leq |\sigma(a_{j_1,d})||\sigma(\alpha)|^d+ \ldots + |\sigma(a_{j_1,0})| \\
& \leq \l( \sum_{\ell=0}^d |\sigma(a_{j_1,\ell})| \r) |\sigma(\alpha)|^d.
\end{align*}
Noting again that $|\sigma(f_{j_1})(\sigma(\alpha))| > K>1$, induction yields
\begin{align}
|\sigma(f_{j_1 \ldots j_k})(\sigma(\alpha))| & \leq \prod_{i=1}^k \l( \sum_{\ell=0}^d |\sigma(a_{j_i,\ell})| \r)^{d^{k-i}} |\sigma(\alpha)|^{d^k} \notag \\
& < |\sigma(\alpha)|^{1+\ldots+d^{k}} < |\sigma(\alpha)|^{2d^{k}}. \label{eq:upbound}
\end{align}
We thus obtain
\begin{align*}
|\sigma(f_{i_1 \ldots i_n}) (\sigma(\alpha))| & = | \sigma( f_{i_1 \ldots i_n}(\alpha)) | = \left | \sum_{k=0}^{n-1} \sum_{j_1,\ldots, j_k = 1}^s \sigma( \gamma_{j_1 \ldots j_k} ) \sigma( f_{j_1 \ldots j_k} (\alpha) ) \right| \\
& \leq A^{d^{n-1}} \sum_{k=0}^{n-1} \sum_{j_1,\ldots, j_k = 1}^s | \sigma(f_{j_1 \ldots j_k})(\sigma(\alpha)) | \\
& \leq A^{d^{n-1}} \sum_{k=0}^{n-1} \sum_{j_1,\ldots, j_k = 1}^s | \sigma(\alpha) |^{2d^k} \leq 2A^{d^{n-1}}s^{n-1} |\sigma(\alpha)|^{2d^{n-1}}.
\end{align*}
This contradicts \eqref{eq:lowbound}, as noting that $d \geq 3$, $n,s \geq 1$ and $|\sigma(\alpha)| > K > 2sm^2A$, we have
\begin{align*}
\frac{|\sigma(\alpha)|^{d^n}}{m^{2d^{n-1}}} & = \frac{|\sigma(\alpha)|^{d^n-2d^{n-1}}}{m^{2d^{n-1}}} |\sigma(\alpha)|^{2d^{n-1}} > \left( \frac{|\sigma(\alpha)|}{m^2} \right)^{d^{n-1}} |\sigma(\alpha)|^{2d^{n-1}} \\
& > (2sA)^{d^{n-1}} |\sigma(\alpha)|^{2d^{n-1}} > 2A^{d^{n-1}}s^{n-1} |\sigma(\alpha)|^{2d^{n-1}}.
\end{align*}
We conclude that $\house{\alpha} \leq K$.

Furthermore, suppose that
$$
|\alpha|_v > \max_{\substack{i=1,\ldots,s\\j=0,\ldots,d-1}} \{|a_{i,j}|_v |a_{i,d}|_v^{-1}, |a_{i,d}|_v^{-1}, |a_{i,d}|_v\},
$$
for some finite place $v$ of $\K$. Then \eqref{eq:grownonarch} gives
$$
|f_{i_1}(\alpha)|_v = |a_{i_1,d}|_v |\alpha|_v^d > |\alpha|_v.
$$
Hence,
\begin{align*}
|f_{i_1 \ldots i_n}(\alpha)|_v & = |a_{i_n,d}|_v |a_{i_{n-1},d}|_v^d \ldots |a_{i_1,d}|_v^{d^{n-1}} |\alpha|_v^{d^n} \\
& = (|a_{i_n,d}|_v |\alpha|_v) (|a_{i_{n-1},d}|_v |\alpha|_v)^d \ldots (|a_{i_1,d}|_v |\alpha|_v)^{d^{n-1}} |\alpha|_v^{d^n-\frac{d^n-1}{d-1}}.
\end{align*}
Since $d \geq 3$, for any $k < n$ we have
$$
d^n - \frac{d^n-1}{d-1} = \frac{d^{n+1}-2d^n-1}{d-1} \geq \frac{d^{k+1}-1}{d-1},
$$
and so for any $j_1, \ldots j_k \in \{ 1, \ldots, s\}$,
\begin{align*}
|f_{i_1 \ldots i_n}(\alpha)|_v & > |\alpha|_v^{\frac{d^{k+1}-1}{d-1}} = \l( |\alpha|_v |\alpha|_v^d \ldots |\alpha|_v^{d^{k-1}} \r) |\alpha|_v^{d^k} \\
& > |a_{j_k,d}|_v |a_{j_{k-1},d}|_v^d \ldots |a_{j_1,d}|_v^{d^{k-1}} |\alpha|_v^{d^k} = |f_{j_1 \ldots j_k}(\alpha)|_v.
\end{align*}
This contradicts the fact that
\begin{align*}
|f_{i_1 \ldots i_n}(\alpha)|_v & \leq \max_{0 \leq k \leq n-1} \left \{ \max_{1 \leq j_1, \ldots, j_k \leq s} |\gamma_{j_1 \ldots j_k}|_v |f_{j_1 \ldots j_k}(\alpha)|_v \right \} \\
& \leq \max_{0 \leq k \leq n-1} \left \{ \max_{1 \leq j_1, \ldots, j_k \leq s} |f_{j_1 \ldots j_k}(\alpha)|_v \right \},
\end{align*}
recalling that  $\gamma_{j_1 \ldots j_k}$ are algebraic integers.

Therefore, similarly to Lemma \ref{lem:algint}, if $D$ is a positive integer such that $Da_{i,d}^{-1}$, $Da_{i,j}$ and $Da_{i,j} a_{i,d}^{-1}$, $i=1,\ldots,s$, $j=0,\ldots,d$, are all algebraic integers, then $D\alpha$ is an algebraic integer.

\section{Final comments and questions}

Certainly we would like to find explicit conditions on the polynomials $f_1,\ldots,f_s\in\K[X]$ such that the set $\Sigma_A(\cF)\cap \K^c$ is finite. Moreover, Theorem~\ref{thm:KcPreper2} applies only to polynomials having the same degree $d\ge 3$, thus obtaining an analogue for polynomials of different degrees, including degree two, would be of great interest.

A particular case would be to describing  polynomials $f_1,f_2\in\K[X]$ such that
$$
\# \{ \alpha \in \K^c \mid f_1^{(n)}(\alpha)/f_2^{(k)}(\alpha) \in \U \textrm{ for some $n,k\ge 1$} \} < \infty.
$$
(We note that for preperiodicity we only need $n\ne k$, however it would be interesting to consider the case $n=k$ as well.)

We note that the approach of the results in this paper seem not to apply in this case.

We are also interested in extending such results to division groups. Let $\Gamma$ be a finitely generated subgroup of $\K^*$ and let $\K_\Gamma=\K(\ov\Gamma)$, where $\ov\Gamma$ is the division group of $\Gamma$, that is,
$$
\ov\Gamma=\{\alpha\in\ov\Q \mid \alpha^n \in \Gamma \textrm{ for some $n\ge 1$ }\}.
$$
We note that when $\Gamma=\{1\}$, then $\ov\Gamma=\U$ and $\K_\Gamma=\K^c$.

\begin{problem}
Let $\cF=\{f_1,\ldots,f_s\}\in\K[X]$. Under what conditions the set $\ov\Pi(\cF)\cap \K_\Gamma$ is finite?
\end{problem}

The first step of course is studying this for preperiodic points of one polynomial, before attempting to tackle this in the semigroup case.

\section*{Acknowledgement}
The authors are very grateful to Igor Shparlinski for many valuable discussions on the topic of the paper and beyond, and for suggesting considering the set $\Sigma_A(\cF)$ in this general form. The authors are also grateful to the anonymous referee for bringing to their attention more references and connections to other works, as well as useful comments that improved the presentation of the paper.

This work  was supported  in part by
the  Australian Research Council  Grant  DP180100201.

\end{document}